\newcommand{\executeiffilenewer}[3]{%
\ifnum\pdfs

trcmp{\pdffilemoddate{#1}}%
{\pdffilemoddate{#2}}>0%
{\immediate\write18{#3}}\fi%
}
\newcommand{%
\executeiffilenewer{.svg}{.pdf}%
{inkscape -z -D --file=.svg %
--export-pdf=.pdf --export-latex}%
\input{.tex}%
}[1]{%
\executeiffilenewer{#1.svg}{#1.pdf}%
{inkscape -z -D --file=#1.svg %
--export-pdf=#1.pdf --export-latex}%
\input{#1.tex}%
}
\theoremstyle{plain}
\newtheorem{thm}{Theorem}[section]
\newtheorem{lem}[thm]{Lemma}
\newtheorem{prop}[thm]{Proposition}
\newtheorem{cor}[thm]{Corollary}
\theoremstyle{definition}
\newtheorem{defi}[thm]{Definition}
\newtheorem{rmk}[thm]{Remark}
\newtheorem{es}[thm]{Example}
\newtheorem{quest}[thm]{Question}
\theoremstyle{plain}
\newtheorem{introthm}{Theorem}
\theoremstyle{definition}
\newcommand{\mc}[1]{\mathcal{#1}}
\newcommand{\on}[1]{\operatorname{#1}}
\newcommand{\wt}[1]{\widetilde{#1}}
\newcommand{\eps}{\varepsilon}
\newcommand{\nN}{\mathbb{N}}
\newcommand{\nZ}{\mathbb{Z}}
\newcommand{\nQ}{\mathbb{Q}}
\newcommand{\nC}{\mathbb{C}}
\newcommand{\nP}{\mathbb{P}}
\newcommand{\nK}{\mathbb{K}}
\newcommand{\nmat}[3]{{\mc{M}(#1 \times #2, #3)}}
\newcommand{\one}{{1\hspace*{-0.8 mm}\mathrm{l}}}
\newcommand{\ve}[1]{{#1}}			
\newcommand{\xx}{{x}}
\newcommand{\vxx}{\ve{\xx}}
\newcommand{\mi}[1]{{#1}}		
\newcommand{\lhs}{\mathbb{I}}
\newcommand{\rhs}{\mathbb{II}}
\newcommand{\abs}[1]{\left|#1\right|}
\newcommand{\norm}[1]{\left\|#1\right\|}
\newcommand{\comp}{}
\newcommand{\kronecker}[2]{\delta_{#1}^{#2}}
\newcommand{\bin}[2]
{\left(\!
\begin{array}{c}
#1\\
#2
\end{array}
\!
\right)}
\newcommand{\sumaa}[2]{\hspace{-#2}\sum_{#1}\hspace{-#2}}
\newcommand{\sumab}[3]{\hspace{-#2}\sum_{#1}\hspace{-#3}}
\newenvironment{myabstract}[1]
{\begin{center}\small{\textbf{#1}}

\vspace{0.25cm}
\begin{minipage}[l]{10.4cm}\begin{small}}
{\end{small}\end{minipage}\end{center}}
\title{Classification of one-dimensional superattracting germs in positive characteristic}
\author[M. Ruggiero]{Matteo Ruggiero}
\thanks{\noindent Fondation Math\'ematique Jacques Hadamard (FMJH).\\
Centre de Math\'ematiques Laurent Schwartz, 
\'Ecole Polytechnique, 
91128 Palaiseau Cedex, France.\\
E-mail: ruggiero@math.polytechnique.fr, tel: (+33) (0)1 69 33 49 18.
\\
The author was supported by the ERC-starting grant project ``Nonarcomp'' no.307856.
}
\date{\today}
\begin{document}

\maketitle

\begin{myabstract}{Abstract}
We give a classification of superattracting germs in dimension $1$ over a complete normed algebraically closed field $\nK$ of positive characteristic up to conjugacy.
In particular we show that formal and analytic classifications coincide for these germs.
We also give a higher dimensional version of some of these results.
\end{myabstract}

\pagestyle{plain}				


\section*{Introduction}

Recent interest arose in understanding the local dynamics of analytic germs $f:(\nK^N,0) \to (\nK^N,0)$ over a (complete normed) field $\nK$ of positive characteristic.
One of the first works in this direction is given by \cite{herman-yoccoz:smalldivisornonarchi}, where the authors deal with some problems of small divisors and resonances in the non-archimedean setting, in particular for analytic germs $f:(\nK,0) \to (\nK,0)$ with $\abs{f'(0)}=1$.
If the characteristic of $\nK$ is zero, the authors show that $f$ is always analytically linearizable, as far as $\lambda:=f'(0)$ is not a root of unity.
This result does not hold in positive characteristic, the problem due to the presence of small divisors, that gives an obstruction to the convergence of the formal conjugacy between $f$ and its linear part $x \mapsto \lambda x$ (see \cite{lindahl:siegellinprimechar}).

In this paper, we are concerned with superattracting germs, characterized by the property that the differential $df_0$ at $0$ is nilpotent.

In dimension one, any superattracting germ can be written under the form $f(x)=Cx^d (1+ \eps(x))$ with $C \neq 0$, $d \geq 2$ and $\eps(0)=0$.
When $\nK$ is the field of complex numbers endowed with the standard euclidean norm, a classical result by B\"ottcher \cite{bottcher:1904} states that $f$ is analytically conjugate to the map $x \mapsto x^d$.
This result still holds for superattracting germs $f:(\nK,0) \to (\nK,0)$ over any field $\nK$ of characteristic zero, endowed with any (archimedean or non-archimedean) complete norm, provided that $\sqrt[d-1]{C} \in \nK$.

B\"ottcher theorem also holds over fields $\nK$ of characteristic $p > 0$ when $d$ and $p$ are coprime.
However it does not hold when $p$ divides $d$.

Consider for example the germs $F(x)=x^p$ (called the \emph{Frobenius automorphism}) and $f(x)=x^p(1+x)$.
Since $F'(x) \equiv 0$ while $f'(x)=x^p$, these two germs cannot be conjugate one to the other.

Since any germ $f$ whose derivative is identically zero can be factorized through the Frobenius automorphism, there exists a unique $m \in \nN$ for which $f=g \circ F^m$, where $g:(\nK,0)\to (\nK,0)$ satisfies $g' \not \equiv 0$.
Set $d=\on{ord}_0(g)$ and $r_0=1+ \on{ord}_0(g')-d$, where $\on{ord}_0$ denotes the order of vanishing at $0$.
The numbers $m$, $d$ and $r_0$ are invariants of (formal) conjugacy.
Notice that either $r_0=0$ (when $p$ and $d$ are coprime), or $r_0 > 0$ is coprime to $p$ (and $p$ divides $d$).

In this paper, we provide the formal and analytic classification of superattracting germs in dimension one, over any algebraically closed field $\nK$ of positive characteristic.

\begin{introthm}\label{thm:Bhard}
Let $\nK$ be an \emph{algebraically closed} field of characteristic $p > 0$.
Let $f:(\nK,0) \to (\nK,0)$ be a superattracting germ.
Then $f$ is formally conjugate to a map $\wt{f}:(\nK,0)\to(\nK,0)$ of the form
\begin{equation}\label{eqn:Bottchernormalform}
\wt{f}(x)=x^{dp^m}\Big(a\big(x^{p^{m+1}}\big)+bx^{r_0p^m}\Big),
\end{equation}
where:
\begin{itemize}
\item $m \in \nN$, $d \in \nN^*$ and $dp^m \geq 2$,
\item either $r_0=0$, or $r_0$ is coprime to $p$ when $p$ divides $d$,
\item $b \neq 0$, and $a \in \nK[z]$ is a polynomial of degree $< r_0/(p-1)$,
\item when $r=0$, then $a \equiv 0$ and $b=1$,
\item when $r>0$, then $a(0)=1$ and $b$ is uniquely determined up to the multiplication by a root of unity $\zeta$ such that $\zeta^{dp^m}=\zeta$.
\end{itemize}
\end{introthm}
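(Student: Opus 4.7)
The plan is to build the normal form by iterative normalizations and then to derive the uniqueness from residual symmetries. I first verify that $m$, $d$ and $r_0$ are invariants of formal conjugacy: from the unique factorization $f=g\circ F^m$ with $g'\not\equiv 0$, a direct computation shows that conjugating $f$ by a formal automorphism $\phi$ produces $\wt f=\wt g\circ F^m$, where $\wt g=\phi^{-1}\circ g\circ \phi^{(p^m)}$ and $\phi^{(p^m)}$ denotes the Frobenius-twist of $\phi$ (each coefficient raised to the $p^m$-th power). This preserves $m$, while the chain rule yields $\on{ord}_0(\wt g)=d$ and $\on{ord}_0(\wt g')=d+r_0-1$, so the triple is invariant. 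A preliminary linear conjugation $x\mapsto\lambda x$, with $\lambda^{dp^m-1}$ chosen by algebraic closedness of $\nK$ so that the leading coefficient of $f$ at $x^{dp^m}$ becomes $1$, sets the stage.

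The core of the argument is an iterative elimination of non-resonant terms. Using $\phi(x)=x+\alpha x^{s+1}$ and expanding $f\circ\phi$ via the binomial theorem, the leading-order change in the coefficient of $x^{N+k}$ (with $N=dp^m$) is $\binom{N}{i}a_N\alpha^i$ for $is=k$. By Lucas's theorem, $\binom{N}{i}\not\equiv 0\pmod p$ iff $i=p^m j$ with $\binom{d}{j}\not\equiv 0\pmod p$; algebraic closedness supplies the $i$-th roots needed to solve for $\alpha$, so one can iteratively kill $a_{N+k}$ whenever a suitable divisor $i$ of $k$ exists, absorbing the lower-order cross-contributions into the choice of $\alpha$. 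When $r_0=0$ (so $p\nmid d$), the choice $i=p^m$ works for every $k$ that is a multiple of $p^m$, reducing $f$ to the B\"ottcher form $\wt f(x)=x^{dp^m}$. When $r_0>0$ (so $p\mid d$ and $\gcd(r_0,p)=1$), the analogous procedure, transposed to the twisted-conjugation action on $\wt g$, eliminates every $c_{d+k}$ except at $k=r_0$ (pinned by the definition of $r_0$ and hence not eliminable) and at the bounded set $k=jp$ with $1\le j<r_0/(p-1)$; the sharp bound emerges from a weighted-degree argument controlling how cascades of conjugations perturb the anchor term $bz^{d+r_0}$.

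For uniqueness, the stabilizer of the normal form among formal automorphisms reduces to the linear maps $x\mapsto\zeta x$ with $\zeta^{dp^m-1}=1$; such a $\zeta$ transforms $a(u)\mapsto a(\zeta^{p^{m+1}}u)$ and $b\mapsto\zeta^{r_0 p^m}b$. Imposing invariance of $a$ pins $\zeta$ to the stabilizer of each active monomial of $a$, and the remaining freedom on $b$ is contained in the group $\{\zeta:\zeta^{dp^m}=\zeta\}$ of $(dp^m-1)$-th roots of unity, matching the statement. I expect the principal obstacle in the whole argument to be the inductive elimination step for $r_0>0$: the Frobenius-twisted action on $\wt g$ is not an honest conjugation, and establishing the sharp bound $\deg a<r_0/(p-1)$ requires careful bookkeeping of how cascades of conjugations interact with the anchor $bz^{d+r_0}$, so that only the prescribed finite set of coefficients persists while all others remain eliminable.
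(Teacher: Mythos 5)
Your treatment of the invariants $(m,d,r_0)$ and of the coprime case $p\nmid d$ is essentially sound, but the core case $p\mid d$ — which is the actual content of the theorem — has a genuine gap. Your elimination criterion (kill the coefficient of $x^{N+k}$, $N=dp^m$, by $\phi(x)=x+\alpha x^{s+1}$ whenever $is=k$ with $\binom{N}{i}\not\equiv 0 \pmod p$) is not the operative mechanism when $p\mid d$: by Lucas such $i$ are sparse (for $d=p$ there are none with $0<i<p^{m+1}$), and the removal of a generic coefficient is in fact driven by the cross terms with the anchor coefficients, i.e.\ by contributions of the shape $\frac{d+r_k}{p^k}\,\eps_{r_k}\,\alpha^{p^{m+k}}$ coming from $\eps_{r_k}\phi(x)^{(d+r_k)p^m}$. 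Crucially, the perturbation coefficient attached to one degree $s$ then enters \emph{simultaneously} the coefficients at all degrees $p^m(r_k+p^k s)$, $0\le k\le \nu_p(d)$, so it can normalize at most one of them; this multiplicity is precisely why finitely many coefficients below $p^m\cdot pr_0/(p-1)$ cannot be removed, and why the bound $\deg a< r_0/(p-1)$ is sharp (for $n\ge pr_0/(p-1)$ the index $k=0$ dominates and the equation becomes $\wt{\eps}_n+r_0\eps_{r_0}\phi_{n-r_0}^{p^m}=(\text{known})$, solvable with $\wt{\eps}_n=0$; see Lemma \ref{lem:howJis} and \eqref{eqn:Bexteqnlot}). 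Your ``weighted-degree argument'' is only asserted, not given. A second, independent problem is that for $i\ge 2$ the conjugation by $x+\alpha x^{s+1}$ also perturbs already-normalized coefficients at the lower degrees $N+i's$, $i'<i$ (through $\binom{N}{i'}\alpha^{i'}$ and cross terms), so an induction on the degree does not close; the paper avoids this by inducting on the quantity $\mc{J}(n)$ of Definition \ref{def:J}, solving at each step for one coefficient of the conjugacy together with \emph{all} the coefficients $\wt{\eps}_n$ it influences.

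The uniqueness part also has a gap. Your claim that the stabilizer of the normal form consists only of linear maps is unjustified and false in general: any power series with coefficients in $\nF_p$ commutes with $x\mapsto x^p$, so already in the B\"ottcher-type cases the stabilizer is huge; and since the polynomial $a$ is \emph{not} uniquely determined (as the finiteness statement in Theorem \ref{thm:Bext} reflects), there exist nonlinear conjugacies between distinct normal forms when $r_0>0$ as well. What the last bullet actually requires is that every formal conjugacy tangent to the identity between two germs in normal form preserves the coefficient of $x^{(d+r_0)p^m}$; this is a nontrivial fact, proved in the paper by analyzing the equations with $\mc{J}(n)=0$ (Lemma \ref{lem:BextRinvariant}), after which the residual linear freedom $x\mapsto \zeta x$, $\zeta^{dp^m-1}=1$, gives exactly the ambiguity $b\mapsto \zeta^{r_0p^m}b$ that you describe. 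That linear computation is correct, but without the invariance statement it does not yield the uniqueness of $b$.
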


Theorem \ref{thm:Bhard} does not provide a complete classification of superattracting germs in positive characteristic, since the polynomial $a$ is not uniquely determined.
In Theorem \ref{thm:Bext} we shall describe normal forms with the property that for any $f$ there exists a \emph{finite} number of such normal forms conjugate to it.

Special cases of our main result were known to the experts.
The case when $d$ and $p$ are coprime can be proved as in the classical case of B\"ottcher's theorem (see Theorem \ref{thm:Beasy}), and Gardner Spencer in his thesis \cite{gardnerspencer:phdthesis} gave a formal classification of superattracting germs when $m=0$ and $d=p$ (see Remark \ref{rmk:normalformse1N'}).

When $\nK$ is endowed with a (complete) norm, we show that analytic and formal conjugacy of superattracting germs in dimension one coincide.

\begin{introthm}\label{thm:Ecalle}
Let $\nK$ be an algebraically closed field of positive characteristic, endowed with a (complete) norm.
Suppose two germs $f, \wt{f}:(\nK,0) \to (\nK,0)$ are formally conjugate.
Then they are \emph{analytically} conjugate.
\end{introthm}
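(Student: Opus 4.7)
By Theorem~\ref{thm:Bhard}, every superattracting germ is formally conjugate to a polynomial normal form of the shape \eqref{eqn:Bottchernormalform}. Since formal and analytic conjugacy are equivalence relations, and two formally conjugate germs share a polynomial normal form, it suffices to prove that every superattracting germ $f$ is analytically conjugate to one of its normal forms $\widetilde f$. Fix such a $\widetilde f$, of order $D := dp^m \geq 2$ at the origin, and a formal conjugacy $\widehat\phi \in x\nK[[x]]$ with $\widehat\phi'(0)\neq 0$ and $\widehat\phi\circ f = \widetilde f\circ\widehat\phi$.

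The strategy is to build the analytic conjugacy as a limit of concrete approximants extracted from the dynamics. Since $\nK$ has positive characteristic its norm is necessarily non-archimedean; on a closed ball $B=\{|x|\leq r\}$ with $r$ small enough that $|f(x)|, |\widetilde f(x)|\leq \rho|x|^D$ and $\rho r^{D-1}<1$, both maps send $B$ into itself with $|f^n(x)|\leq (\rho^{1/(D-1)} r)^{D^n}$, a doubly exponential decay. Truncate $\widehat\phi$ at a large order $N$ to a polynomial $P$, so that $P\circ f - \widetilde f\circ P = E$ has $\on{ord}_0 E \geq N+D$, and define
\[
\phi_n(x) := \Psi_n\bigl(P(f^n(x))\bigr),
\]
where $\Psi_n$ is the branch of $\widetilde f^{-n}$ built inductively (with $\Psi_0 = \on{id}$ and $\Psi_{n+1}$ obtained from $\Psi_n$ by post-composition with the appropriate branch of $\widetilde f^{-1}$) so that $\phi_n$ agrees with $P$, and hence with $\widehat\phi$, up to order $N$. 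The guiding picture is that for large $n$ the point $f^n(x)$ is minuscule, so $P(f^n(x))$ approximates $\widehat\phi(f^n(x)) = \widetilde f^n(\widehat\phi(x))$ to very high order in $x$, and applying $\Psi_n$ undoes $\widetilde f^n$ to recover $\widehat\phi(x)$.

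The heart of the proof is to show that $(\phi_n)_n$ is Cauchy in the sup-norm on $B$; its limit is then analytic, conjugates $f$ to $\widetilde f$, and coincides with $\widehat\phi$ formally by matching orders. This is also where the main obstacle lies: when $p\mid D$ the polynomial $\widetilde f$ is wildly ramified at the origin, factoring essentially through the $m$-th Frobenius iterate, so the branches $\Psi_n$ are only H\"older with exponent $1/p^{mn}$. A careful telescoping estimate, using the H\"older exponent $1/p^m$ of the outermost factor $\widetilde f^{-1}$ and the H\"older exponent $1/p^{mn}$ of $\Psi_n$, together with $|E(f^n(x))|\lesssim r^{(N+D)D^n}$, yields $|\phi_{n+1}(x) - \phi_n(x)| \lesssim r^{(N+D)d^n/p^m}$, which gives double-exponential convergence when $d\geq 2$. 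The delicate case is $d=1$ (so $D = p^m$), in which $d^n=1$ and this estimate degenerates: here $f = g\circ F^m$ and $\widetilde f = \widetilde g\circ F^m$ with $g,\widetilde g$ analytic isomorphisms at $0$, and the identity $F^m\circ\widehat\phi = \widehat\phi^{(m)}\circ F^m$ (with $\widehat\phi^{(m)}$ the Frobenius-twist of $\widehat\phi$) reduces the conjugacy equation to $\widehat\phi\circ g = \widetilde g\circ\widehat\phi^{(m)}$, an equation between invertible germs that can be solved by classical contraction methods. Organizing all branch choices coherently, controlling the H\"older losses in the non-archimedean setting, and completing the $d=1$ case are where the real work lies.
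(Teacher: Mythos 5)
Your construction is the classical B\"ottcher/Koenigs scheme $\phi_n=\widetilde f^{-n}\circ P\circ f^n$, and it does work (and essentially reproduces the paper's Theorem \ref{thm:Beasy}) in the tame case, where the inverse branch of $x\mapsto x^{d}(1+\cdots)$ can be realized analytically via the binomial series for $d$-th roots. But in the wild case $p\mid dp^m$ --- which is the whole difficulty of Theorem \ref{thm:Ecalle} --- there is a genuine gap at the central step ``the limit is then analytic''. When $m\geq 1$ the map $\widetilde f$ factors through the Frobenius $F^m$, whose set-theoretic inverse $z\mapsto z^{1/p^m}$ exists ($\nK$ is perfect) but is \emph{not} analytic (it is not given by any power series over $\nK$); likewise, when $p\mid d$ the separable part has no analytic $d$-th-root inverse branch, since $(1+u)^{1/d}$ has no meaning as a power series (Remark \ref{rmk:rootwelldefined} fails). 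Hence your approximants $\phi_n=\Psi_n\circ P\circ f^n$ are not analytic germs, and a uniform limit of non-analytic functions on a ball need not be analytic; the non-archimedean fact ``uniform limits of analytic functions are analytic'' is exactly what you cannot invoke. Your fallback, that the limit ``coincides with $\widehat\phi$ formally by matching orders'', is circular: to compare Taylor coefficients of the limit with those of $\widehat\phi$ you already need the limit to be analytic, and showing that $\widehat\phi$ converges is precisely the statement to be proved. Treating the wildness only as a quantitative loss (H\"older exponents $1/p^{mn}$ in the telescoping estimate) misses this qualitative obstruction; there are also unaddressed issues in choosing the branches $\Psi_n$ coherently on punctured discs for the separable part (monodromy), and the $d=1$ reduction to $\widehat\phi\circ g=\widetilde g\circ T^m\widehat\phi$ is a Frobenius-twisted (semi-linear) conjugacy equation, not an ordinary conjugacy of invertible germs, so ``classical contraction methods'' do not apply as stated (in fact $d=1$ has $\nu_p(d)=0$ and is already covered by the tame case).

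The paper avoids inverting $\widetilde f$ altogether. In the tame case (Theorem \ref{thm:Beasy}) the Frobenius is handled at the level of \emph{coefficients}, via the operator $T$ of \eqref{eqn:defT} (solving $T^m\eps^{(n+1)}=\eps^{(n)}\circ g$ by taking $p^m$-th roots of coefficients, which keeps everything analytic), not by composing with the non-analytic map $F^{-m}$. In the wild case the paper works directly on the formal conjugacy: using the recursion $r_0\eps_{r_0}\phi_n^{p^m}=\lhs_{n+r_0}-\rhs^*_{n+r_0}$ extracted from the proof of Theorem \ref{thm:Bext}, it proves by a majorant-type induction (Proposition \ref{prop:Eest}, with the auxiliary sequences $s_h$, $c_n$) that $\abs{\phi_n}\leq\gamma^{c_n}$ grows at most exponentially, whence $\widehat\phi$ converges. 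If you want to salvage a dynamical argument, you would have to replace the inverse branches by some analytic surrogate (e.g.\ work with the coefficients or with a Frobenius descent), which in effect leads back to the coefficient estimates the paper carries out.
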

\'Ecalle conjectured that formal and analytic classifications coincide for superattracting germs $f:(\nC^N,0) \to (\nC^N,0)$ over the complex numbers (with standard euclidean norm), in any dimension $N\geq 1$.
Hence Theorem \ref{thm:Ecalle} gives a positive answer to \'Ecalle's conjecture in our setting.

\smallskip

As an example of result that holds for superattracting germs over fields of positive characteristic in higher dimensions, we give a sufficient condition for a superattracting germ to be conjugate to a monomial map.
\begin{introthm}\label{thm:monomial}
Let $\nK$ be a complete normed (possibly non algebraically closed) field of characteristic $p > 0$.
Let $\ve{f}:(\nK^N, 0) \rightarrow (\nK^N,0)$ be a superattracting germ of the form
\begin{equation}\label{eqn:Mstart}
\ve{f}(\vxx)=\ve{C} \vxx^\mi{D} \big(\one + \eps(\vxx)\big),
\end{equation}
where $\vxx \in \nK^N$, $\ve{C} \in (\nK^*)^N$, $\mi{D} \in \nmat{N}{N}{\nN}$ and $\eps:(\nK^N, 0) \to (\nK^N, 0)$ with $\ve{\eps}(\ve{0})=\ve{0}$.

Suppose $\det \mi{D}$ is coprime to $p$.

Then $f$ is analytically conjugate to its leading monomial part
\begin{equation}\label{eqn:Mnormalform}
\ve{\wt{f}}(\vxx) = \ve{C} \vxx^\mi{D}.
\end{equation}
\end{introthm}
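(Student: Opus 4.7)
The plan is to mirror a higher-dimensional B\"ottcher construction. I look for the conjugacy in the form $\ve{\phi}(\vxx) = \vxx\cdot\ve{h}(\vxx)$ (componentwise multiplication), with $\ve{h}(\ve{0})=\one$. Substituting into $\ve{\phi}\circ\ve{f} = \ve{\wt{f}}\circ\ve{\phi}$ and cancelling the common factor $\ve{\wt{f}}(\vxx)=\ve{C}\vxx^\mi{D}$ reduces the conjugacy equation to
\[
\ve{h}(\vxx)^\mi{D} = \bigl(\one + \ve{\eps}(\vxx)\bigr)\cdot\ve{h}\bigl(\ve{f}(\vxx)\bigr),
\]
where $\ve{h}^\mi{D}$ denotes the vector whose $i$-th component is $\prod_j h_j^{D_{ij}}$. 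Iterating the rearranged equation $\ve{h}(\vxx) = \bigl[(\one+\ve{\eps}(\vxx))\,\ve{h}(\ve{f}(\vxx))\bigr]^{\mi{D}^{-1}}$ leads to the infinite-product candidate
\[
\ve{h}(\vxx) \;=\; \prod_{k=0}^\infty \bigl(\one + \ve{\eps}(\ve{f}^k(\vxx))\bigr)^{\mi{D}^{-k-1}}.
\]

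The first task will be to give meaning to the fractional powers appearing above. Since $\det\mi{D}$ is coprime to $p$, every entry of $\mi{D}^{-k-1}$ is a rational with denominator prime to $p$, hence lies in $\nZ_{(p)}\subset\nZ_p$ and descends to a well-defined element of the prime field $\nF_p\subset\nK$. For any $\alpha\in\nZ_{(p)}$, the binomial coefficients $\binom{\alpha}{n}$ are $p$-adic integers (by continuity from $\alpha\in\nN$), so the binomial series $(1+y)^\alpha:=\sum_n \binom{\alpha}{n}y^n$ has coefficients in $\nF_p\subset\nK$; this definition then extends multiplicatively to the vectorial exponents $\mi{D}^{-k-1}$.

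The second task is convergence, both formal and analytic. Since $\ve{f}$ is superattracting, its Jacobian at $0$ is nilpotent, so a direct iteration shows that $\on{ord}_0(\ve{f}^k)\to\infty$, and each factor in the product differs from $\one$ by a series of unboundedly increasing order of vanishing, yielding convergence in the $\vxx$-adic topology. For analytic convergence I will use that any complete normed field of positive characteristic is automatically non-archimedean (the relation $x^{p-1}=1$ forces $\nF_p^*\subset\nK$ to have norm $1$). Restricting to a sufficiently small polydisc, $\norm{\ve{f}^k(\vxx)}\to 0$ fast enough that each binomial series converges and each factor tends to $\one$ uniformly; the ultrametric inequality then yields uniform convergence of the infinite product to an analytic $\ve{h}$ with $\ve{h}(\ve{0})=\one$. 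The resulting $\ve{\phi}$ has identity differential at $\ve{0}$, hence defines a local analytic isomorphism, and it satisfies the conjugacy equation by passage to the limit in the iterated identity.

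The main obstacle will be the first task: making sense of fractional powers of power series in characteristic $p$, where the usual $\log/\exp$ route is blocked by $p$ appearing in denominators. The hypothesis $\gcd(\det\mi{D},p)=1$ is precisely what circumvents this difficulty, confining all arising exponents to $\nZ_{(p)}$, for which binomial coefficients remain $p$-adic integers and so survive reduction to $\nK$. Once this foundational issue is handled, the remaining work is bookkeeping with non-archimedean estimates and a standard limiting argument.
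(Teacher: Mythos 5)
Your proposal is correct and follows essentially the same route as the paper: the same reduction of the conjugacy equation after factoring out $\ve{C}\vxx^{\mi{D}}$, the same infinite-product solution with exponents $\mi{D}^{-k}$ made meaningful via $p$-integrality of binomial coefficients (the paper's Remark \ref{rmk:rootwelldefined}), and the same convergence argument from the contraction estimate $\norm{\ve{\eps}\circ\ve{f}^{\comp k}(\vxx)}\to 0$ together with the non-archimedean convergence criterion (Proposition \ref{prop:convergenceiseasy}). Your observation that a complete normed field of characteristic $p>0$ is automatically non-archimedean is a nice explicit justification of a point the paper leaves implicit.
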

Notice that in general a map of the form \eqref{eqn:Mstart} will not be conjugate to a map of the form \eqref{eqn:Mnormalform} with $\ve{C}=(1, \ldots, 1)=:\one$, not even when $\nK$ is algebraically closed.
Indeed, it is possible only when $1$ is not an eigenvalue for $\mi{D}$ (see Remark \ref{rmk:MwhataboutC}).

To read \eqref{eqn:Mstart} and \eqref{eqn:Mnormalform}, we used the following notation.
Write $\vxx=(\xx^1, \ldots, \xx^N)$ and $\mi{D}=(d_i^j)$.
Then we set $\vxx^\mi{D}=\big((\vxx^{\mi{D}})^1, \ldots, (\vxx^{\mi{D}})^N\big)$, with
$$
(\vxx^{\mi{D}})^j = \prod_{i=1}^N (\xx^i)^{d_i^j}
$$
The product between two vectors in $\nK^N$ is the product coordinate by coordinate: if $\ve{C}=(C^1, \ldots, C^N)$, then $\ve{C} \vxx \in \nK^N$ is defined by $\ve{C} \vxx = (C^1 \xx^1, \ldots, C^N \xx^N)$.

\medskip

We now indicate how we prove our stated results.
The proof of Theorem \ref{thm:Bhard} has a combinatorial flavour.
Indeed, we need to solve the conjugacy relation $\Phi \circ f = \wt{f} = \Phi$, where $f$ is the given germ, $\wt{f}$ is the candidate normal form, and $\Phi$ is the unknown change of coordinates.
We write $f$, $\wt{f}$ and $\Phi$ in formal power series.
The conjugacy relation induces an infinite number of relations between the coefficients of such formal power series, where the unknowns are the coefficients of $\Phi$ and $\wt{f}$.
We solve these equations by induction, that is at some points quite intricate.

To prove Theorem \ref{thm:Ecalle}, we estimate the growth of the coefficients of a conjugacy $\Phi$ between $f$ and $\wt{f}$ in the normal form given by Theorem \ref{thm:Bhard}.
The case when $d$ and $p$ are coprime is easy, and can be dealt with classical arguments (similar to the classical proof of B\"ottcher's theorem).
When $p$ divides $d$, the combinatorics is much more delicate to deal with.
In this case, we estimate the growth rate of the coefficients of $\Phi$ by majorant series techniques, using the recursion formulae derived in the proof of Theorem \ref{thm:Bhard}.

For both these results, the main difficulties arise from the delicate combinatorics of the equations to solve, given by the positive characteristic setting.

The proof of Theorem \ref{thm:monomial} is analogous to the one working in the complex setting. We use part of the techniques developed in \cite{ruggiero:rigidgerms} to prove the result.

\medskip

The paper is organized as follows.
In the first section we fix some notations and recall a few properties of non-archimedean norms over fields of positive characteristic.
In the second section we introduce the discrete invariants for superattracting germs in dimension one, and study their behavior under composition and iteration.
In the third section we state the formal classification Theorem \ref{thm:Bext} of superattracting germs in dimension one.
We also give some remarks, deduce Theorem \ref{thm:Bhard}, and give some restrictions on the invariants of superattracting germs given by the action at infinity of a polynomial in $\nK$.
In the fourth section we give the analytic classification of superattracting germs when $d$ and $p$ are coprime.
In the fifth section we prove Theorem \ref{thm:Bext}, and in the sixth section we prove Theorem \ref{thm:Ecalle}.
In the seventh (and last) section, we conclude by proving Theorem \ref{thm:monomial}, and by giving some remarks and open questions on the local classification of superattracting germs in higher dimensions over fields of positive characteristic.


\section{Basics}

In this section we recall a few properties for non-archimedean norms.
For the whole paper, all norms will be complete.

\begin{prop}\label{prop:convergenceiseasy}
Let $(a_n)_n$ be a sequence in a any field $\nK$ endowed with a non-archimedean norm.
Then
$$
\prod_n (1+a_n) \mbox{ converges} \Longleftrightarrow \sum_n a_n \mbox{ converges} \Longleftrightarrow a_n \to 0.
$$
\end{prop}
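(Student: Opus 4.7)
The plan rests on one fact specific to complete non-archimedean normed fields, which I will call the ultrametric Cauchy criterion: by the strong triangle inequality,
$$
|s_n - s_m| \leq \max_{m \leq k < n} |s_{k+1} - s_k|
$$
for any $n > m$, so a sequence $(s_n)$ is Cauchy, and hence (by completeness of $\nK$) convergent, if and only if its consecutive differences tend to zero. This reduces each of the three conditions to a check of the form $|s_{n+1}-s_n| \to 0$ for the partial sums or partial products.

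For the equivalence $\sum_n a_n$ converges $\Longleftrightarrow a_n \to 0$, I would apply the criterion to the partial sums $S_N = \sum_{n \leq N} a_n$: since $|S_{N+1} - S_N| = |a_{N+1}|$, convergence is directly equivalent to $a_n \to 0$ with no further work.

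For the product equivalence, set $P_N = \prod_{n \leq N}(1+a_n)$. In one direction, if the product converges to a (necessarily nonzero) limit $L$, then $1 + a_N = P_N/P_{N-1} \to 1$, forcing $a_N \to 0$. In the other direction, assume $a_n \to 0$. The crucial non-archimedean input is that once $|a_n| < 1$, which holds for all $n$ sufficiently large, the ultrametric inequality gives $|1+a_n| = 1$. Hence $|P_N|$ becomes eventually constant, equal to some $\rho > 0$, and so
$$
|P_{N+1} - P_N| = |P_N| \cdot |a_{N+1}| = \rho\, |a_{N+1}| \longrightarrow 0,
$$
which by the criterion above yields convergence of $(P_N)$.

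The only delicate point is not analytic but a matter of convention: if one allows some factor $1 + a_n$ to vanish, the product degenerates to zero and the forward implication must be read in the usual product-convergence sense, excluding the finitely many zero factors before taking the limit. With that standard convention in place, no obstacle remains; the argument is entirely driven by the ultrametric Cauchy criterion together with the observation $|1+a_n|=1$ for $|a_n|<1$.
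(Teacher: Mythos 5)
Your proof is correct; the paper states this proposition without proof, as a recalled standard fact, and your argument via the ultrametric Cauchy criterion (consecutive differences tend to zero, completeness, and the observation that $\left|1+a_n\right|=1$ once $\left|a_n\right|<1$) is exactly the standard reasoning it implicitly relies on. Note only that completeness, which you correctly use, is guaranteed by the paper's blanket convention that all norms are complete, and your remark about zero factors matches the usual convention for infinite products.
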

%
%

We introduce here the $p$-adic valuation on integers, that will be very useful throughout the whole paper.

\begin{defi}
Let $p$ be a prime number, and $n \in \nZ$. The \emph{$p$-adic valuation} $\nu_p$ is defined by
$$
\nu_p(n)=\sup\{k \in \nN\ |\ p^k \text{ divides } n\} \in \nN \cup \{+\infty\}.
$$
\end{defi}
From now on, $p$ will be always denote a prime number, equal to the characteristic of $\nK$.

\begin{rmk}\label{rmk:rootwelldefined}
Let $b \in \nN$ be such that $\nu_p(b)=0$, i.e., $p$ and $b$ are coprime.
Then $(1+\xx)^{1/b}$ is a well-defined analytic germ.
Indeed, we can define
\begin{equation}\label{eqn:defprimeroot}
(1+\xx)^{1/b}=\sum_{n=0}^\infty \bin{1/b}{n} \xx^n,
\end{equation}
where
$$
\bin{1/b}{n}= \frac{b^{-1} \cdot (b^{-1}-1) \cdots (b^{-1}-n+1)}{n!} = \frac{(1-b)\cdots (1-(n-1)b)}{n!b^n}.
$$
Set $b_k=1-bk$.
Notice that if $\nu_p(b_k)=v$, then $\nu_p(b_{k+h})< v$ for $0 < h < p^v$, and $\nu_p(b_{k+p^v})\geq v$.
It follows that
$$
\on{Card}\{k\ |\ 0 \leq k < n, \nu_p(b_k)\geq v\} \geq \left\lfloor\frac{n}{p^v}\right\rfloor
= \on{Card}\{k\ |\ 0 \leq k < n, \nu_p(k+1)\geq v\}.
$$
Hence $\nu_p(b_0 \cdots b_{k-1}) \geq \nu_p(n!)$ and $\bin{1/b}{n}$ is a well defined element in $\nK$, of norm either $0$ or $1$.
Therefore \eqref{eqn:defprimeroot} defines an analytic germ over $\nK$.

It follows that if $\eps:(\nK^d,0) \to (\nK,0)$ is an analytic germ with $\eps(\ve{0})=0$, we can define $(1+\eps(\vxx))^{a/b}$ as an analytic germ $(\nK^d,0) \to \nK$, as long as $\nu_p(a) \geq \nu_p(b)$.
\end{rmk}

We shall need the next proposition to study the convergence of formal power series.
\begin{prop}\label{prop:prodconverges}
Let $\nK$ be a normed field of characteristic $p>0$.
Let $(\ve{\eps}_n)_n$ be a sequence of $r$-uples of formal power series in $N$ variables.
Let $(\mi{D}_n)_n$ be a sequence of matrices in $\nmat{r}{s}{\nQ}$ (with $s \in \nN^*$), with entries that are of the form $a/b \in \nQ$ with $\nu_p(a) \geq \nu_p(b)$.
Suppose
$$
\norm{\ve{\eps}_n(\vxx) \mi{D}_n} \to 0
$$
for $n \to +\infty$ and $\norm{\vxx}$ small enough.
Then
\begin{equation}\label{eqn:prodconverges}
\prod_{n=0}^\infty \big(\one+\ve{\eps}_n(\vxx)\big)^{\mi{D}_n} < +\infty
\end{equation}
converges for $\norm{\vxx}$ small enough.
\end{prop}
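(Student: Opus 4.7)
The plan is to prove convergence coordinate by coordinate. For each $j\in\{1,\ldots,s\}$, the $j$-th coordinate of \eqref{eqn:prodconverges} equals $\prod_{n\geq 0} Q_n^j(\vxx)$ with
$$
Q_n^j(\vxx) := \prod_{i=1}^r \bigl(1+\eps_n^i(\vxx)\bigr)^{d_n^{i,j}},
$$
each factor being a well-defined analytic germ by Remark \ref{rmk:rootwelldefined} — the condition $\nu_p(a)\ge\nu_p(b)$ imposed on the entries of $\mi{D}_n$ is exactly what that remark requires. By Proposition \ref{prop:convergenceiseasy} applied to the sequence $(Q_n^j-1)_n$, it then suffices to show that $\|Q_n^j(\vxx)-1\|\to 0$ as $n\to\infty$ for $\|\vxx\|$ small enough.

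To estimate a single factor, I would write $d_n^{i,j}=p^{v}\,a'/b'$ in reduced form, where $v=\nu_p(d_n^{i,j})\ge 0$ and $p\nmid a'b'$. Since $\nK$ has characteristic $p$, the Frobenius identity $(1+u)^{p^v}=1+u^{p^v}$ gives
$$
(1+\eps_n^i)^{d_n^{i,j}}=\bigl(1+(\eps_n^i)^{p^v}\bigr)^{a'/b'},
$$
and expanding by the binomial series of Remark \ref{rmk:rootwelldefined}, whose coefficients have norm $\le 1$ (a verbatim repetition of the valuation count carried out there), one obtains the key estimate $\|(1+\eps_n^i)^{d_n^{i,j}}-1\|\le\|\eps_n^i\|^{p^{\nu_p(d_n^{i,j})}}$ as soon as $\|\eps_n^i\|<1$. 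Setting $\delta_n^{i,j}:=(1+\eps_n^i)^{d_n^{i,j}}-1$, the non-archimedean expansion of $\prod_i(1+\delta_n^{i,j})-1$ then yields
$$
\|Q_n^j-1\|\le\max_i\|\delta_n^{i,j}\|\le\max_i\|\eps_n^i\|^{p^{\nu_p(d_n^{i,j})}},
$$
and the hypothesis $\|\ve{\eps}_n(\vxx)\mi{D}_n\|\to 0$ is tailored to control precisely this $p$-adically weighted maximum, forcing $\|Q_n^j-1\|\to 0$ for every $j$ and concluding via Proposition \ref{prop:convergenceiseasy}.

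The main obstacle is the characteristic $p$ phenomenon that makes the naive linearization $(1+u)^\alpha-1\approx\alpha u$ useless as soon as $p\mid\alpha$: in that case $\alpha u=0$ in $\nK$ although $(1+u)^\alpha-1$ is not, its true norm being $\|u\|^{p^{\nu_p(\alpha)}}$. Tracking this $p$-adic shift through the Frobenius factorization of each exponent, and then reassembling the contributions of the different indices $i$ via the non-archimedean triangle inequality, is the technical heart of the argument.
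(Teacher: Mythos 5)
Your argument is correct and follows essentially the same route as the paper: reduce to a single coordinate, write $(\one+\ve{\eps}_n)^{\mi{D}_n}$ as a product of factors $(1+\eps_n^i)^{d_n^{i,j}}$ that are well defined by Remark \ref{rmk:rootwelldefined}, and conclude with Proposition \ref{prop:convergenceiseasy}; in fact you supply the Frobenius-based estimate $\norm{(1+\eps_n^i)^{d_n^{i,j}}-1}\leq \norm{\eps_n^i}^{p^{\nu_p(d_n^{i,j})}}$ that the paper's three-line proof leaves implicit. The only point where you are more generous than the hypothesis literally warrants is the closing claim that $\norm{\ve{\eps}_n(\vxx)\mi{D}_n}\to 0$ controls this $p$-adically weighted maximum: when $p$ divides the reduced form of $d_n^{i,j}$, that entry acts as $0$ on $\nK$, so the hypothesis says nothing about $\norm{\eps_n^i}$ there even though the corresponding factor can still differ from $1$ by $\norm{\eps_n^i}^{p^{\nu_p(d_n^{i,j})}}$; this looseness is already present in the paper's own statement and proof, and in the sole application (Theorem \ref{thm:Beasy}) the exponents are prime to $p$, so the issue never arises.
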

\begin{proof}
First, notice that \eqref{eqn:prodconverges} is a vector expression. We can estimate each entry, and suppose $s=1$.
Write $\mi{D}_n=(d_n^1, \ldots, d_n^s)$, where $d_n^j$ are the columns of $\mi{D}_n$, and $\eps_n= (\eps_n^1, \ldots, \eps_n^r)$.
Then
$$
\big(\one + \ve{\eps}_n\big)^{\mi{D}_n} = \prod_{j=1}^r (1+\eps_n^j)^{d_n^j}.
$$
Since $d_n^j=a/b$ with $\nu_p(a)\geq \nu_p(b)$, then $(1+\eps_n^j)^{d_n^j}$ is well defined for any $j$ (see Remark \ref{rmk:rootwelldefined}).
The statement then follows by Proposition \ref{prop:convergenceiseasy}.
\end{proof}

A special role among superattracting germs in dimension one is played by the \emph{Frobenius automorphism} $F(x)=x^p$.
The next proposition shows what happens if we conjugate a germ $\Psi$ by $F$.
\begin{prop}\label{prop:defT}
Let $\Psi \in \nK[[x]]$ be any formal power series. Then there exists a formal power series $T\Psi \in \nK[[x]]$ such that
$$
F \circ \Psi = T\Psi \circ F,
$$
where $F$ denotes the Frobenius automorphism.

Suppose $\Phi=T\Psi$ and there exists $M>0$ such that $\abs{\Phi(x)} \leq M \abs{x}$.
Then $\abs{\Psi(x)} \leq M^{1/p} \abs{x}$.
\end{prop}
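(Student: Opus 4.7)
My plan is to handle the two claims separately, and both reduce to exploiting characteristic $p$ arithmetic. For the existence of $T\Psi$, the decisive fact is that the Frobenius is additive on $\nK[[x]]$: since every binomial coefficient $\binom{p}{k}$ with $0 < k < p$ is divisible by $p$, one has $(A+B)^p = A^p + B^p$ for any $A, B \in \nK[[x]]$, and by iteration (or equivalently, inspection of the multinomial expansion, whose cross-term coefficients all vanish in characteristic $p$) the computation
$$
\left(\sum_{n \geq 0} a_n x^n\right)^p = \sum_{n \geq 0} a_n^p\, x^{np}
$$
holds formally. Writing $\Psi(x) = \sum_n a_n x^n$ I would therefore obtain
$$
F \circ \Psi(x) = \Psi(x)^p = \sum_{n \geq 0} a_n^p\, x^{np},
$$
which is manifestly a power series in $x^p$. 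Defining
$$
T\Psi(y) := \sum_{n \geq 0} a_n^p\, y^n
$$
then gives $T\Psi \circ F(x) = T\Psi(x^p) = \sum_n a_n^p x^{np} = F \circ \Psi(x)$ by construction, which is the required identity.

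For the norm estimate, I would simply exploit the identity $\Psi(x)^p = \Phi(x^p)$ just produced (valid for $|x|$ small enough by the assumed convergence). Taking norms and applying the hypothesis $|\Phi(z)| \leq M|z|$ with $z = x^p$,
$$
|\Psi(x)|^p = |\Phi(x^p)| \leq M\,|x|^p,
$$
so extracting $p$-th roots gives the asserted bound $|\Psi(x)| \leq M^{1/p}|x|$.

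There is no substantial obstacle: the whole argument is a direct consequence of the additivity of the Frobenius on $\nK[[x]]$ together with multiplicativity of the norm. The only point that deserves a line of verification is the formal identity $\Psi(x)^p = \sum_n a_n^p x^{np}$, but this is standard and follows from the divisibility properties of the multinomial coefficients in characteristic $p$.
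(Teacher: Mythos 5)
Your proof is correct and follows essentially the same route as the paper: the paper defines $T\big(\sum_n \psi_n x^n\big) = \sum_n \psi_n^p x^n$ (exactly your $T\Psi$) and notes that the stated properties are easily verified, which is precisely the Frobenius-additivity computation and the norm estimate $|\Psi(x)|^p = |\Phi(x^p)| \leq M|x|^p$ that you carry out explicitly.
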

\begin{proof}
The operator $T:\nK[[x]] \to \nK[[x]]$ is defined by
\begin{equation}\label{eqn:defT}
T\left(\sum_n\psi_n x^n\right):=\sum_n \psi_n^p x^n. 
\end{equation}
All stated properties can be easily verified.
\end{proof}


\section{Discrete invariants}\label{sec:discreteinvariants}

In this section we shall describe all the discrete invariants for the classification of one dimensional superattracting germs up to (formal) conjugacy, over an algebraically closed field $\nK$ of characteristic $p>0$.

\begin{defi}\label{def:m}
Let $f(x)=\sum_{n} f_n x^n \in \nK[[x]]$ a (non-constant) formal power series. We set
$$
m(f):=\min\{\nu_p(n)\ |\ n \in \nN, f_n \neq 0\} \in \nN.
$$
\end{defi}

Notice that $m=m(f)$ if and only if there exists $g \in \nK[[y]]$ such that $g'(y) \not \equiv 0$ and $f=g \circ F^m$.

The number $m$ is an invariant of conjugacy. Indeed, $p^m$ is the inseparable degree of $\nK[[x]]/(f)$ over $\nK$.
It can also be shown by an easy computation.

\begin{prop}\label{prop:minvariant}
Let $f:(\nK,0) \to (\nK,0)$ be a superattracting germ. Let $m(f) \in \nN$ be defined as in Definition \ref{def:m}.
If $\wt{f}$ is formally conjugate to $f$, then $m(\wt{f})=m(f)$.
\end{prop}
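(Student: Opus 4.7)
My plan is to use the characterization of $m$ stated immediately after Definition \ref{def:m}: one has $m = m(f)$ if and only if $f = g \circ F^m$ for some $g \in \nK[[y]]$ with $g' \not\equiv 0$. This $m$ is uniquely determined: if $g_1 \circ F^{m_1} = g_2 \circ F^{m_2}$ with $m_1 < m_2$ and both $g_i'$ nonzero, cancelling $F^{m_1}$ would give $g_1 = g_2 \circ F^{m_2-m_1}$, whose derivative vanishes identically because $(F^k)'(x) = p^k x^{p^k-1} \equiv 0$ for $k \geq 1$ in characteristic $p$, contradicting $g_1' \not\equiv 0$.

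Let $\Phi \in \nK[[x]]$ be a formal change of coordinates (so $\Phi(0)=0$, $\Phi'(0)\neq 0$) realizing $\wt{f} = \Phi^{-1} \circ f \circ \Phi$, and write $f = g \circ F^m$ with $m = m(f)$ and $g' \not\equiv 0$. The key move is to commute $F^m$ past $\Phi$ using Proposition \ref{prop:defT}: iterating the identity $F \circ \Psi = T\Psi \circ F$ yields $F^m \circ \Phi = T^m \Phi \circ F^m$. Setting $h := \Phi^{-1} \circ g \circ T^m\Phi$, I obtain
$$\wt{f} = \Phi^{-1} \circ g \circ F^m \circ \Phi = h \circ F^m.$$

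It then suffices to check that $h' \not\equiv 0$, after which the uniqueness above forces $m(\wt{f}) = m = m(f)$. Since $T$ acts on coefficients by $\psi_n \mapsto \psi_n^p$, iterating gives $T^m\Phi(x) = \phi_1^{p^m} x + \ldots$ with $\phi_1^{p^m} \neq 0$, so $T^m\Phi$ is again an invertible formal change of coordinates. By the chain rule,
$$h'(x) = (\Phi^{-1})'\bigl(g(T^m\Phi(x))\bigr) \cdot g'\bigl(T^m\Phi(x)\bigr) \cdot (T^m\Phi)'(x),$$
which is a product of three nonzero elements of the integral domain $\nK[[x]]$: the first and third factors have nonzero constant term (being derivatives of invertible series, suitably evaluated), while the middle factor is the substitution of the invertible series $T^m\Phi$ into the nonzero series $g'$, preserving nonvanishing. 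Hence $h' \not\equiv 0$, completing the argument. The only genuine subtlety is the characteristic-$p$ fact that $(F^k)' \equiv 0$ for $k \geq 1$; this is both what forces uniqueness of the factorization $f = g \circ F^{m(f)}$ and what makes the operator $T$ of Proposition \ref{prop:defT} the correct tool to carry out the conjugation.
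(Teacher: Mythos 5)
Your proof is correct and takes essentially the same route as the paper: write $f=g\circ F^{m}$ with $g'\not\equiv 0$ and commute $F^{m}$ past the change of coordinates via the operator $T$ of Proposition \ref{prop:defT}, so that $\wt{f}$ factors through $F^{m}$. The only (harmless) difference is in the conclusion: the paper deduces from this factorization just the inequality $m(\wt{f})\geq m$ and gets the reverse inequality by swapping the roles of $f$ and $\wt{f}$, whereas you verify directly, via the chain rule in $\nK[[x]]$, that the cofactor $h=\Phi^{-1}\circ g\circ T^{m}\Phi$ satisfies $h'\not\equiv 0$ and invoke uniqueness of the factorization; both arguments are valid.
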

\begin{proof}
Write
$$
f=g \circ F^m, \quad \wt{f}=\wt{g} \circ F^{\wt{m}},
$$
with $g', \wt{g}' \not \equiv 0$.

Let $\Phi \in \nK[[x]]$ be the conjugacy between $f$ and $\wt{f}$, and denote by $\Psi \in \nK[[x]]$ its inverse.
Hence $\wt{f}= \Phi \circ f \circ \Psi$.
It follows that
$$
\wt{f} = \Phi \circ g \circ F^m \circ \Psi = \Phi \circ g \circ T^m\Psi \circ F^m.
$$
Hence $\wt{m} \geq m$.
By switching the role of $f$ and $\wt{f}$, we get the opposite inequality $m \geq \wt{m}$, and hence $m=\wt{m}$ is an invariant of conjugacy.
\end{proof}

%
\begin{defi}\label{def:d}
Let $g(y)=\sum_{n} g_n y^n \in \nK[[y]]$ be a formal power series with $g' \not \equiv 0$. We set
$$
d(g)=\on{ord}_0(g):=\min\{n\ |\ g_n \neq 0\} \in \nN
$$
the order of vanishing of $g$ at $0$.
If $f=g \circ F^m$, we set $d(f):=d(g)$.
\end{defi}

Notice that $\on{ord}_0(f)=dp^m$, hence $d$ is an invariant of conjugacy.

%
\begin{defi}\label{def:r}
Let $g(y)=y^d \sum_n \eps_n y^n \in \nK[[y]]$ be a formal power series with $g' \not \equiv 0$ and $\eps_0 \neq 0$. We define recursively the sequence $r(g)=(r_u)_{u \in \nN}$ as following.
\begin{align*}
r_0&:= \min\{n \in \nN |\ \nu_p(d+n)=0 \text{ and } \eps_n \neq 0\},\\
r_u&:= \min\{n \in \nN |\ \nu_p(d+n)=u \text{ and } \eps_n \neq 0\} 	\wedge r_{u-1},\quad \text{for } u \in \nN^*.
\end{align*}
If $f=g \circ F^m$, we set $r(f):=r(g)$.
\end{defi}

Notice that $r_0+d-1 = \on{ord}_0(g')$, and $r_u$ is by definition a non-increasing sequence.
Moreover, $r_u=0$ for all $u \geq \nu_p(d)$.

We shall show that the sequence $r=r(f)$ is an invariant of conjugacy (see Lemma \ref{lem:BextRinvariant}).

\medskip

We conclude this section by studying how these discrete invariants behave under composition.
\begin{thm}\label{thm:comp}
Let $f', f'':(\nK,0) \to (\nK,0)$ be two superattracting germs over an algebraically closed field $\nK$ of characteristic $p>0$, and denote by $f=f''\circ f'$ their composition.
Set $m=m(f), d=d(f), r=r(f)$, $m'=m(f'), d'=d(f'), r'=r(f')$ and $m''=m(f''), d''=d(f''), r''=r(f'')$ given by Definitions \ref{def:m}, \ref{def:d}, \ref{def:r}.
Set $e=\nu_p(d)$, $e'=\nu_p(d')$, $e''=\nu_p(d'')$.
Then
\begin{align}
m &= m'+m'', \nonumber \\
d &= d'd'', \nonumber\\
e &= e'+e'', \nonumber\\
r_u &\geq \min\{d' r''_k + p^k r'_h\ |\ 0 \leq h \leq e', 0 \leq k \leq e'', h+k=u\} \quad \text{ for } 0 \leq u \leq e. \label{eqn:compr}
\end{align}
In the last relation, the equality holds in any of the following cases:
\begin{itemize}
\item the minimum is attained only for a unique choice of $(h,k)$ with $h+k=u$;
\item for $u=0, u=e$;
\item for a generic choice of $f', f''$.
\end{itemize}
\end{thm}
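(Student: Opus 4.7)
My plan is to use Proposition \ref{prop:defT} to push all Frobenius factors to the right: writing $f' = g'\circ F^{m'}$ and $f'' = g''\circ F^{m''}$, the identity $F^{m''}\circ g' = (T^{m''}g')\circ F^{m''}$ gives $f = g''\circ(T^{m''}g')\circ F^{m'+m''}$. Setting $h := T^{m''}g'$, the operator $T^{m''}$ preserves the set of nonzero coefficients, so $d(h)=d'$ and $r(h)=r'$; hence $g := g''\circ h$ is the canonical $g$-factor of $f$. The first three equalities then follow quickly: $m=m'+m''$ because $(g''\circ h)'=(g'')'(h)\cdot h'$ is a product of two nonzero power series; $d=d'd''$ by multiplicativity of orders of vanishing; and $e=e'+e''$ by applying $\nu_p$.

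For the $r$-inequality, expand
$$
g(y) = \sum_{n''\geq 0}\eps''_{n''}\,h(y)^{d''+n''},
$$
write $h(y) = y^{d'}H(y)$ with $H(0)\neq 0$, and for each $n''$ with $\eps''_{n''}\neq 0$ decompose $d''+n'' = p^v q$ with $v = \nu_p(d''+n'')$ and $p\nmid q$. The Frobenius identity $(H^q)^{p^v} = \sum_j\bigl([y^j]H^q\bigr)^{p^v} y^{jp^v}$ shows that the $n''$-summand contributes to the coefficient $\eps_n$ of $y^{d+n}$ in $g$ only when $p^v\mid n-d'n''$, and then the contribution equals $\eps''_{n''}\cdot\bigl([y^j]H^q\bigr)^{p^v}$ with $j := (n-d'n'')/p^v$. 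Since $d+n = p^v(d'q+j)$, the condition $\nu_p(d+n)\leq u$ becomes $\nu_p(d'q+j)\leq u-v$.

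The heart of the proof is a combinatorial lemma: if $[y^j]H^q\neq 0$ and $\nu_p(d'q+j)\leq h$ with $h\leq e'$, then $j\geq r'_h$. The case $h = e'$ is trivial since $r'_{e'} = 0$; for $h < e'$, the equality $\nu_p(d'q) = e' > h$ forces $\nu_p(j)\leq h$. Now $[y^j]H^q\neq 0$ implies $j$ admits a decomposition $j = j_1+\cdots+j_q$ with each $j_i$ in the support of $H$; if every positive $j_i$ satisfied $\nu_p(j_i)>h$, the sum would be divisible by $p^{h+1}$, contradicting $\nu_p(j)\leq h$. Hence some $j_i\geq 1$ satisfies $\nu_p(d'+j_i)=\nu_p(j_i)\leq h$ and $\eps'_{j_i}\neq 0$, so $j_i \geq r'_h$ and therefore $j \geq r'_h$. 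Combined with the bound $n''\geq r''_v$ from the definition of $r''_v$, every contribution with $v\leq e''$ yields $n\geq d'r''_v + p^v r'_{u-v}$; a separate estimate shows that contributions with $v > e''$ satisfy $n'' \geq p^v - d''$, forcing $n$ to exceed every value appearing in the formula's minimum. Setting $k = v$ and $h = u - k$ yields the claim.

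For the equality cases, when the minimum is attained uniquely at $(h^*, k^*)$, this uniqueness forces $\nu_p(d''+r''_{k^*}) = k^*$ (otherwise some level $v < k^*$ would produce a strictly smaller formula value), and the only pair $(n'',j)$ contributing to the exponent $n = M := d'r''_{k^*}+p^{k^*}r'_{h^*}$ is $(r''_{k^*}, r'_{h^*})$. The lemma's argument further shows that $[y^{r'_{h^*}}]H^{q^*}$ reduces to a single-term contribution whose coefficient is nonzero, hence $\eps_M\neq 0$. The boundary cases $u=0$ and $u=e$ admit unique pairs $(0,0)$ and $(e',e'')$ respectively. Generically, the sum of contributions to $\eps_M$ from the various minimizing pairs is a non-identically-zero polynomial in the coefficients $\eps'_n, \eps''_n$, treated as independent indeterminates, hence is nonzero for a generic choice. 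The main obstacle I anticipate is the careful bookkeeping in the generic case: distinct minimizing pairs contribute via different $p^{k}$-th powers of the underlying coefficients, and ensuring that their sum does not vanish identically will require a precise algebraic-independence argument.
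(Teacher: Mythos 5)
Your route is essentially the paper's: reduce to $f=(g''\circ T^{m''}g')\circ F^{m'+m''}$, obtain the three equalities, then expand $g''\circ h$ with $h(y)=y^{d'}H(y)$ and exploit the Frobenius structure of $H^{p^vq}$; your support lemma ($[y^j]H^q\neq 0$ and $\nu_p(d'q+j)\leq h\leq e'$ imply $j\geq r'_h$) is exactly the content of the paper's lemma on the pieces $\eps^{(k)}$, and is proved correctly. However, one step as written would fail: the summands with $v:=\nu_p(d''+n'')>e''$. The bound $n''\geq p^v-d''$ alone does not force $n=d'n''+jp^v$ to exceed the minimum in \eqref{eqn:compr}, let alone ``every value appearing'' in it. Concretely, take $p=2$, $d'=4$, $d''=2$, $u=2$, $r'=(103,102,0)$, $r''=(101,0)$, with $\eps''_2\neq 0$ (compatible with these invariants since $\nu_2(2+2)=2$): the minimum is $2r'_1=204$, while your estimate only gives $n\geq d'(p^v-d'')=8$ for the $v=2$ summand, so it proves nothing there. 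Such summands are not vacuous whenever $e''<v\leq u$ can occur. They can be disposed of with tools you already have: for $e''<v\leq u\leq e$ one has $u-v<e'$, so your lemma gives $j\geq r'_{u-v}$, and $\nu_p(d''+n'')>e''$ forces $\nu_p(n'')=e''$, hence $n\geq d'p^{e''}+p^{v}r'_{u-v}> p^{e''}r'_{u-e''}=d'r''_{e''}+p^{e''}r'_{u-e''}$, using $p^v>p^{e''}$, monotonicity of $r'$ and $r''_{e''}=0$; so $n$ dominates the admissible term at $(h,k)=(u-e'',e'')$ and hence the minimum. A smaller patch of the same kind is needed when $v\leq e''$ but $u-v>e'$: then $(u-v,v)$ lies outside the index set of \eqref{eqn:compr}, and you must compare with the admissible pair $(e',u-e')$ via $r''_v\geq r''_{u-e'}$ and $r'_{e'}=0$, rather than ``setting $k=v$, $h=u-k$''.

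The equality discussion is also thinner than it needs to be. In the unique-minimum case, the claim that $[y^{r'_{h^*}}]H^{q^*}$ ``reduces to a single-term contribution'' needs the refined support argument (a decomposition $r'_{h^*}=j_1+\cdots+j_{q^*}$ with all parts in the support of $H$ and at least two nonzero parts produces a part $\geq r'_{\min_i\nu_p(j_i)}\geq r'_{h^*}$, a contradiction), and the surviving coefficient is $q^*$ copies of $H_0^{q^*-1}H_{r'_{h^*}}$, so its nonvanishing uses precisely $p\nmid q^*$, i.e.\ $\nu_p(d''+r''_{k^*})=k^*$; this is where the paper's preliminary reduction to strictly decreasing $r',r''$ does the work you attribute to ``uniqueness forces''. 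Finally, for the generic case no algebraic-independence argument is required: as in the paper, the coefficient at the minimal exponent is the explicit finite sum over the minimizing pairs $(h,k)\in E$ of unit multiples of $\eps''_{r''_k}\big(\eps'_{r'_h}\big)^{p^{m''+k}}$, and genericity means exactly that this explicit expression is nonzero.
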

\begin{proof}
Write $f'=g' \circ F^{m'}$ and $f''=g'' \circ F^{m''}$ where the derivatives of $g'$ and $g''$ are not identically zero, and $F$ is the Frobenius automorphism.
Then
$$
f'' \circ f' = g'' \circ F^{m''} \circ g' \circ F^{m'} = g'' \circ T^{m''} g' \circ F^{m' + m''},
$$
where $T$ is the operator defined by \eqref{eqn:defT}.
By direct computation, the derivative of $g'' \circ T^{m''} g'$ is not identically $0$, and $m=m'+m''$.  

Now, since $p^m d=\on{ord}_0(f)$ is the order of vanishing of $f$ at $0$, and analogously for $f'$ and $f''$, we infer $p^m d = p^{m'}d' p^{m''}d''$, and hence $d=d' d''$.
The relation $e=e'+e''$ directly follows.

We now prove the the relation \eqref{eqn:compr}.
Notice that if $r''_k = r''_{k-1}$ for some $k\geq 1$, then $d'r''_k + p^k r'_h > d'r''_{k-1} + p^{k-1} r'_h \geq d'r''_{k-1} + p^{k-1} r'_{h+1}$ for any $h < e'$, and the minimum in \eqref{eqn:compr} is not attained by the values $(k,h)$.
Analogously if $r'_h=r'_{h-1}$ for some $h \geq 1$.

It follows that without loss of generality, we can suppose that $r'$ and $r''$ are strictly decreasing sequences, or equivalently that $\nu_p(r'_h)\wedge e'=h$, and $\nu_p(r''_k) \wedge e''=k$.
Set
$$
T^{m''}g'(y)=y^{d'} \overbrace{\sum_{n} a_n y^n}^{a(y)}, \qquad g''(z)=z^{d''} \sum_n b_n z^n.
$$
By the definition of $r'$ and $r''$, we have:
\begin{itemize}
\item $a_n = 0$ for any $n < r'_h$, $0 \leq h < e'$, $h=\nu_p(n)$, and analogously $b_n = 0$ for any $n < r''_k$, $0 \leq k < e''$, $k = \nu_p(n)$;
\item $a_{r'_h} \neq 0$ for any $0 \leq h \leq e'$, and analogously $b_{r''_k} \neq 0$ for any $0 \leq k \leq e''$.
\end{itemize}

We now want to study $g'' \circ T^{m''}g'(y)$ and its formal power series expansion. We get 
$$
g'' \circ T^{m''}g'(y) = \sum_j b_j y^{d'(d''+j)} \big(a(y)\big)^{d''+j}.
$$
For any $k=0, \ldots, e''$, set
$$
\eps^{(k)}(y):= \sumaa{j, \nu_p(j)\wedge e''=k}{0.3cm} b_j y^{d'j} \big(a(y)\big)^{d''+j}.
$$

The key of the proof of \eqref{eqn:compr} is given by the next lemma.
\begin{lem}\label{lem:compepsk}
Let $0 \leq k \leq e''$. Then we have
\begin{equation}\label{eqn:compepsk}
\eps^{(k)}(y) = b_{r''_k} y^{d'r''_k} \left( \frac{d''+r''_k}{p^k} \right) \sum_{h=0}^{e'} \left(a_{r'_h}^{p^k} y^{p^k r'_h} + o^{k+h}(y^{p^k r'_h})\right),
\end{equation}
where $o^{k+h}(y^{p^k r'_h})$ denotes a suitable formal power series on $y^{p^{k+h}}$ whose order or vanishing at $0$ (with respect to $y$) is $> p^k r'_h$. 
\end{lem}
\begin{proof}
We need to compute $(a(y))^{d''+j}$ for any $j$ with $\nu_p(j) \wedge e''=k$.
If $j < r''_k$, then $b_j = 0$.
Consider now $j = r''_k$. 
Notice that we also have $\nu_p(d''+r''_k) = k$.
In this case we have
\begin{equation}\label{eqn:compapower}
\big(a(y))^{d''+r''_k} = \big(T^k a(y^{p^k})\big)^{(d''+r''_k)/p^k}.
\end{equation}
Notice that $\nu_p\big(\frac{d''+r''_k}{p^k}\big)=0$.
The smallest degree that appear \eqref{eqn:compapower} of the form $y^{np^k}$ with $\nu_p(n)\wedge e'=h$ is given by
$$
\frac{d''+r''_k}{p^k} a_{r'_h}^{p^k} y^{p^k r'_h}.
$$
Hence we get an equation on the form \eqref{eqn:compepsk} when we consider $j\leq r''_k$ in the sum defining $\eps^{(k)}$.

We conclude by noticing that if $j > r''_k$, then $d'j > d'r''_k$, and the orders that appear for $j>r''_k$ are higher than the one got for $j=r''_k$. 
\end{proof}

We now conclude the proof of Theorem \ref{thm:comp}.

First, notice that $g'' \circ T^{m''}g'(y)=y^d \sum_{k=0}^{e''} \eps^{(k)}(y)$.
Hence, by Lemma \ref{lem:compepsk} we get
\begin{equation}\label{eqn:compconclusion}
r_u \geq \on{ord}_0\left(\sum_{k+h=u} \frac{d''+r''_k}{p^k} b_{r''_k}a_{r'_h}^{p^k} y^{d'r''_k+p^k r'_h}\right) \geq \min_{h+k=u} \{d'r''_k+p^k r'_h\},
\end{equation}
that gives us \eqref{eqn:compr}.
Notice that the coefficients in the sum of \eqref{eqn:compconclusion} are all different from zero.
For the properties of valuations, we have equalities in \eqref{eqn:compconclusion} when there is only one $(h,k)$ for which the minimum is attained.
In particular, this is always verified for $u=0$ (in this case $h=k=0$), or $u=e$ (in this case $h=e', k=e''$, end $r_e=0$).

More generally, suppose the minimum is attained for $(h,k) \in E$ for a suitable set of choices $E$.
Then we have equalities when
$$
\sum_{(h,k)\in E} b_{r''_k}a_{r'_h}^{p^k} \neq 0.
$$

\end{proof}

\begin{rmk}
In the notations of Theorem \ref{thm:comp}, suppose that $r'=r(f')$ is a strictly decreasing sequence.
Then we have that $r=r(f'' \circ f')$ is (generically) a strictly decreasing sequence.
Indeed, since $r'_h < r_{h-1}$, we have $d'r''_k + p^k r'_h < d'r''_k + p^k r'_{h-1}$.
It follows that for generic $f'$ and $f''$ we have
$$
r_u(f'' \circ f') = \min_{h+k=u} \{d'r''_k+p^k r'_h\} < \min_{h+k=u-1} \{d'r''_k+p^k r'_h\} = r_{u-1}(f'' \circ f')
$$
for any $u = 0, \ldots, e$.
\end{rmk}

Applying Theorem \ref{thm:comp} to the iterates of a superattracting germ, we get by induction the following corollary.
\begin{cor}
Let $f:(\nK,0) \to (\nK,0)$ be a superattracting germ over an algebraically closed field $\nK$ of characteristic $p>0$.
Let $m=m(f), d=d(f), r=r(f)$ be given by Definitions \ref{def:m}, \ref{def:d}, \ref{def:r}, and set $e=\nu_p(d)$.
Then
\begin{align*}
m(f^n) &= n m, \\
d(f^n) &= d^n, \\
e(f^n) &= n e, \\
r_0(f^n) &= 
\begin{cases}
r_0\frac{d^n-1}{d-1} & \text{if } e \geq 1,\\
0 & \text {if } e=0.
\end{cases}
\end{align*}
\end{cor}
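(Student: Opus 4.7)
The plan is straightforward induction on $n$, invoking Theorem \ref{thm:comp} at each step. The base case $n=1$ is trivially the identity. For the inductive step, I apply Theorem \ref{thm:comp} to the decomposition $f^{n+1} = f \circ f^n$, setting $f'' = f$ and $f' = f^n$. The first three relations are immediate: additivity of $m$ gives $m(f^{n+1}) = m + nm = (n+1)m$, multiplicativity of $d$ gives $d(f^{n+1}) = d \cdot d^n = d^{n+1}$, and additivity of $e$ gives $e(f^{n+1}) = e + ne = (n+1)e$.

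The only nontrivial point is the recursion for $r_0$. Here I exploit the fact that Theorem \ref{thm:comp} guarantees \emph{equality} in \eqref{eqn:compr} when $u = 0$, since the only pair $(h,k)$ with $h+k = 0$ is $(0,0)$, so the minimum is attained uniquely. With $d' = d(f^n) = d^n$, $r''_0 = r_0(f)$ and $r'_0 = r_0(f^n)$, this yields
\[
r_0(f^{n+1}) = d^n \, r_0(f) + r_0(f^n).
\]

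When $e = 0$, we have $r_0 = 0$ by definition (since $\nu_p(d) = 0$ and $\varepsilon_0 \neq 0$), and the recursion preserves $r_0(f^n) = 0$ trivially. When $e \geq 1$, substituting the inductive hypothesis $r_0(f^n) = r_0 \frac{d^n-1}{d-1}$ into the recursion gives
\[
r_0(f^{n+1}) = d^n r_0 + r_0 \cdot \frac{d^n - 1}{d-1} = r_0 \cdot \frac{d^n(d-1) + (d^n - 1)}{d-1} = r_0 \cdot \frac{d^{n+1} - 1}{d-1},
\]
closing the induction. There is no real obstacle here: the only subtle point is invoking the equality clause of Theorem \ref{thm:comp} for $u = 0$, which is the easiest instance of the equality criterion (unique minimizer).
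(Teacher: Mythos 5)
Your proof is correct and matches the paper's approach: the paper states this corollary as an immediate consequence of Theorem \ref{thm:comp} applied inductively to the iterates, using exactly the equality case at $u=0$ (unique minimizer $(h,k)=(0,0)$) to get the recursion $r_0(f^{n+1}) = d^n r_0 + r_0(f^n)$, which telescopes to the stated formula.
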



\section{Normal forms}

In this section we describe the normal forms obtained, and state our main result.

We first need a few definitions and preparatory lemmas.

\begin{defi}\label{def:J}
Let $e \in \nN$, and let $r=(r_0, \ldots, r_e)$ (given by Definition \ref{def:r} as $r=r(f)$ for a suitable $f$) be a non-increasing sequence with $r_e=0$.
For any $n \in \nN$ and $0 \leq k \leq e$, we set
\begin{align*}
\mc{J}_k(n):=&
\begin{cases}
\frac{n-r_k}{p^k} & \text{if } k \leq \nu_p(n) \text{ and } n > r_k,\\
0 & \text{otherwise};
\end{cases}\\
\mc{J}(n):=&\max\{\mc{J}_k(n), 0 \leq k \leq e\}.
\end{align*}
\end{defi}

\begin{rmk}
Notice that if $r_k<r_{k-1}$ (for $k < e$), by construction $\nu_p(r_k) = k$, and $\mc{J}_k(n) \in \nN$ for any $n$.
If $r_k=r_{k-1}$, then we could have $\mc{J}_k(n) \in \nQ \setminus \nN$. Nevertheless, in this case we have
$$
\mc{J}_k(n)= \frac{n-r_k}{p^k} = \frac{n-r_{k-1}}{p^k} < \frac{n-r_{k-1}}{p^{k-1}} = \mc{J}_{k-1}(n).
$$
It follows that $\mc{J}(n)$ is always an integer.
\end{rmk}

\begin{lem}\label{lem:Jnonempty}
The set $\{n\ |\ \mc{J}(n)=j\}$ is non-empty for any $j \in \nN$.
\end{lem}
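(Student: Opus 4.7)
My plan is to prove the surjectivity of $\mc{J}\colon \nN \to \nN$ by a minimality argument, treating $j = 0$ as a trivial base case and arguing by contradiction for $j \geq 1$.

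For $j = 0$ it suffices to take $n = 0$: for every $k \in \{0, \ldots, e\}$ we have $n \leq r_k$, so $\mc{J}_k(0) = 0$ and hence $\mc{J}(0) = 0$.

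For $j \geq 1$, I would first observe that the set
\[
S_j := \{n \in \nN \mid \mc{J}(n) \geq j\}
\]
is non-empty. Indeed, $n_0 := jp^e$ satisfies $p^e \mid n_0$ and $n_0 > 0 = r_e$, so $\mc{J}_e(n_0) = j$, giving $\mc{J}(n_0) \geq j$. Define $n^\ast := \min S_j$; the claim is that $\mc{J}(n^\ast) = j$ exactly.

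The key step is to rule out the possibility $\mc{J}(n^\ast) > j$. Suppose by contradiction $\mc{J}(n^\ast) = j + \ell$ for some $\ell \geq 1$, and pick an index $k \in \{0, \ldots, e\}$ realizing the maximum, so $\mc{J}_k(n^\ast) = j + \ell$. By the very definition of $\mc{J}_k$, this forces $p^k \mid n^\ast$, $n^\ast > r_k$, and $n^\ast = (j+\ell)p^k + r_k$, which in turn forces $p^k \mid r_k$. Set $n' := n^\ast - p^k = (j+\ell-1)p^k + r_k$. Since $j + \ell \geq 2$ we have $n' > r_k$; moreover $p^k \mid n'$ and $n' \geq 0$, so $\mc{J}_k(n') = j + \ell - 1 \geq j$. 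Therefore $n' \in S_j$ with $n' < n^\ast$, contradicting the minimality of $n^\ast$. Hence $\mc{J}(n^\ast) = j$.

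No real obstacle is expected here; the only delicate point is verifying the strict inequality $n' > r_k$ demanded by the definition of $\mc{J}_k$, which requires $j + \ell \geq 2$ and is precisely the reason the case $j = 0$ must be handled separately.
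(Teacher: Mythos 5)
Your argument is correct, but it takes a different route from the paper. The paper proves the lemma constructively, in one line: it exhibits the explicit witness $N'(j):=\min_k\{r_k+p^kj\}$ and checks directly that $\mc{J}(N'(j))=j$. You instead prove existence by an extremal argument: $S_j=\{n \mid \mc{J}(n)\geq j\}$ is non-empty (via $n_0=jp^e$, using $r_e=0$), and a descent step ($n'=n^\ast-p^k$) rules out $\mc{J}(\min S_j)>j$. Your steps check out — in particular $p^k\mid n^\ast$ does give $p^k\mid n'$ and $n'>r_k$ when $j+\ell\geq 2$ — with one point you should make explicit: writing $\mc{J}(n^\ast)=j+\ell$ with $\ell\geq 1$ an integer uses the fact that $\mc{J}(n)\in\nN$, which is not obvious from Definition \ref{def:J} alone (the individual $\mc{J}_k(n)$ can be non-integral rationals) but is exactly the content of the remark preceding the lemma; cite it. As for what each approach buys: your proof is self-contained modulo that remark, and in fact your $n^\ast$ coincides with $N'(j)$ (any $n$ with $\mc{J}_k(n)\geq j$ in the non-trivial case satisfies $n=r_k+p^k\mc{J}_k(n)\geq r_k+p^kj$), but you do not produce the closed formula; the paper's explicit $N'(j)$ is reused later (Remark \ref{rmk:normalformse1N'}) as a concrete choice of $N(j)$ yielding the normal forms compared with Gardner Spencer's, so the constructive version carries extra value downstream.
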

\begin{proof}
It is straightforward to show that
$$
N'(j) := \min_k\{r_k + p^k j\}
$$
satisfies $\mc{J}(N'(j))=j$.
\end{proof}

\begin{lem}\label{lem:howJis}
The map $\mc{J}$ defined in Definition \ref{def:J} satisfies the following properties.
\begin{enumerate}[(i)]
\item $\{n\ |\ \mc{J}(n)=0\} = \{n \in \nN\ |\ \nu_p(n)=u < e, n \leq r_u\} \cup \{0\}$.
\item If $\mc{J}(n) < r_0/(p-1)$, then $n < p r_0/(p-1)$.
\item For any $j \geq r_0/(p-1)$, we have $\{n\ |\ \mc{J}(n)=j\}= \{r_0+j\}$.
\end{enumerate}
\end{lem}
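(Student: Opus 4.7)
The three parts are largely direct manipulations of the definitions, so my plan is to unpack $\mc{J}(n) = \max_k \mc{J}_k(n)$ carefully for each.

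For part (i), I would first observe that $n=0$ is automatic since $\mc{J}_k(0)=0$ for all $k$ (we never have $0 > r_k$). For $n > 0$, note that $\mc{J}_k(n)$ can only be nonzero when $k \leq \nu_p(n)$, so the only potentially nonzero terms are those with $0 \leq k \leq u := \min(\nu_p(n), e)$. Since $r_k$ is non-increasing, the condition $n \leq r_k$ for all such $k$ is equivalent to $n \leq r_u$. If $\nu_p(n) \geq e$ then $u = e$ and $n \leq r_e = 0$ forces $n = 0$, contradicting $n > 0$. So the nonzero elements of the level set are exactly those $n$ with $\nu_p(n) = u < e$ and $n \leq r_u$.

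Part (ii) is easiest to phrase as a contrapositive: if $n \geq pr_0/(p-1)$, then in particular $n > r_0$, so $\mc{J}_0(n) = n - r_0 \geq pr_0/(p-1) - r_0 = r_0/(p-1)$, whence $\mc{J}(n) \geq r_0/(p-1)$.

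For part (iii), fix $j \geq r_0/(p-1)$. I would first verify that $n = r_0 + j$ works: clearly $\mc{J}_0(r_0+j) = j$, and for $k \geq 1$, whenever $\mc{J}_k(r_0+j)$ is nonzero, we have $\mc{J}_k(r_0+j) = (r_0 + j - r_k)/p^k \leq j$ iff $r_0 - r_k \leq (p^k-1)j$, which holds because $(p^k-1)j \geq (p-1)j \geq r_0 \geq r_0 - r_k$. Hence $\mc{J}(r_0+j) = j$. For uniqueness, suppose $\mc{J}(n) = j$, so that $\mc{J}_k(n) = j$ for some $k$, i.e.\ $n = r_k + p^k j$. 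The goal is to force $k = 0$ (or show that the resulting $n$ coincides with $r_0 + j$). Assume $k \geq 1$. Then $n \geq p^k j \geq pj > r_0$, so $\mc{J}_0(n) = r_k - r_0 + p^k j$, and the requirement $\mc{J}_0(n) \leq j$ becomes $(p^k - 1)j \leq r_0 - r_k \leq r_0$; combined with $j \geq r_0/(p-1)$ this forces $p^k - 1 \leq p - 1$, hence $k = 1$, and equality throughout gives $r_1 = 0$ and $j = r_0/(p-1)$. In that case $n = pj = r_0 + j$, confirming the claim.

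The main obstacle I anticipate is the borderline of part (iii), i.e.\ the scenario $j = r_0/(p-1)$ with $r_1 = 0$, where a second index $k=1$ superficially also achieves $\mc{J}_k(n) = j$; the key observation that saves uniqueness is the identity $pj = r_0 + j$ at that precise value of $j$, so both indices describe the same $n$. Keeping careful track of when inequalities among the $\mc{J}_k$ are strict versus saturated is really the only delicate point in the argument.
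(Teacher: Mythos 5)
Your proposal is correct and follows essentially the same elementary route as the paper: unwind the definition of the $\mc{J}_k$ and compare each $\mc{J}_k(n)$ with $\mc{J}_0(n)$ (part (i) being a direct check, part (ii) the inequality $\mc{J}_0(n)=n-r_0\ge r_0/(p-1)$ for $n\ge pr_0/(p-1)$). The only organizational difference is that the paper gets (ii) and (iii) in one stroke by observing that $\mc{J}_k(n)\le n/p\le n-r_0=\mc{J}_0(n)$ whenever $n\ge pr_0/(p-1)$ (so $\mc{J}=\mc{J}_0$ there) and $\mc{J}_k(n)\le n/p<r_0/(p-1)$ otherwise, which avoids the borderline case $j=r_0/(p-1)$, $r_1=0$ that you handle separately via the identity $pj=r_0+j$.
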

\begin{proof}
The first property is straightforward.

Suppose $n \geq pr_0/(p-1)$, or equivalently $n/p \leq n-r_0$.
Then for any $k\geq 1$ we have
$$
\mc{J}_k(n)=\frac{n-r_k}{p^k} \leq \frac{n}{p} \leq n-r_0 = \mc{J}_0(n).
$$
Hence $\mc{J}(n)=\mc{J}_0(n) = n-r_0 \geq r_0/(p-1)$.

Suppose now $n< p r_0 /(p-1)$.
Then for any $k \geq 1$ we have
$$
\mc{J}_k(n)=\frac{n-r_k}{p^k} \leq \frac{n}{p} < \frac{r_0}{p-1}.
$$
The statement follows.
\end{proof}
\begin{rmk}\label{rmk:howJis}
Lemma \ref{lem:howJis} can be easily improved. Indeed one can show that the property stated in (iii) holds for any $j$ so that
$$
j \geq \max_{k \geq 1} \frac{r_0 - r_k}{p^k-1}.
$$
\end{rmk}

We are now able to state the classification result.

\begin{thm}\label{thm:Bext}
Let $\nK$ be an algebraically closed field of characteristic $p > 0$.
Let $f:(\nK,0) \to (\nK,0)$ be a superattracting germ.
Set $m=m(f), d=d(f), r=r(f)$ as in Definitions \ref{def:m}, \ref{def:d}, \ref{def:r}.
Set $e=\nu_p(d)$, and denote by $\mc{J}$ the map introduced in Definition \ref{def:J}.

For any $0 < j < r_0/(p-1)$, pick $N(j)$ such that $\mc{J}(N(j))=j$.

Then $f$ is conjugate to a germ of the form:
\begin{equation}\label{eqn:Bextnormalform}
\wt{f}(x)=\big(x^{p^m}\big)^d a\big(x^{p^{m}}\big),
\end{equation}
where either $e=0$ and $a \equiv 1$, or $a \in \nK[y]$ is a polynomial of degree $< pr_0/(p-1)$.
In the latter case, write $a$ under the form
$$
a(y)=\sumaa{0 \leq n < p r_0/(p-1)}{0.6cm} a_n y^n.
$$
Then $a$ also satisfies the following conditions.
\begin{enumerate}[(i)]
\item We have $a_{r_e}=a_0=1$.
\item For any $0 \leq u < e$, $n < r_u$ with $\nu_p(n) = u$, then $a_n=0$.
\item For any $0 \leq u < e$, then $a_{r_u} \neq 0$.
\item For any $0 < j < r_0/(p-1)$, $a_{N(j)}=0$.  
\end{enumerate}

Finally, there exist only finitely many germs of the form \eqref{eqn:Bextnormalform} satisfying all conditions (i--iv) that are conjugate to $f$.
\end{thm}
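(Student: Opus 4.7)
The plan is to attack the formal conjugacy equation $\Phi \circ f = \wt{f} \circ \Phi$ directly, solving it coefficient-by-coefficient. As a preliminary step, I would normalize the leading coefficient: writing $f(x) = c_0 x^{dp^m}(1 + O(x^{p^m}))$ with $c_0 \neq 0$, the rescaling $x \mapsto cx$ with $c^{dp^m - 1} = c_0$ (available since $\nK$ is algebraically closed) reduces to $c_0 = 1$. This contributes the initial $(dp^m - 1)$-fold ambiguity, which matches the root-of-unity action $\zeta^{dp^m} = \zeta$ present in the statement. After this I would look for $\Phi(x) = x + \sum_{k \geq 2} \phi_k x^k$ tangent to the identity and $\wt{f}(x) = x^{dp^m} a(x^{p^m})$ with $a(0) = 1$.

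Both sides of the conjugacy equation are naturally power series in $y = x^{p^m}$: the left side because $f = g \circ F^m$; the right side because the freshman's dream identity gives $\Phi(x)^{p^m} = y + \sum_{k \geq 2} \phi_k^{p^m} y^k =: \Theta(y)$, so that $\wt{f}(\Phi(x)) = \Theta(y)^d\, a(\Theta(y))$. Matching coefficients of $y^{d+N}$ on the two sides yields, for each $N \geq 0$, a linear equation of the schematic form
\[
a_N + \sum_{k} c_{N,k}\,\phi_k^{p^m} \;=\; (\text{polynomial in already-determined data}),
\]
where the coefficients $c_{N,k} \in \nK$ can be read off using Theorem \ref{thm:comp} and Lemma \ref{lem:compepsk}. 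The inductive task at stage $N$ is either to solve for some $\phi_k^{p^m}$ while imposing $a_N = 0$, or conversely to solve for $a_N$.

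The function $\mc{J}$ of Definition \ref{def:J} is the exact bookkeeping device telling us which path is available. When $\mc{J}(N) = j > 0$, the valuation-theoretic analysis of composition shows that the minimizing pair $(u, j')$ with $N = r_u + p^u j'$ produces a nonzero coefficient $c_{N, k}$ at $k = j' + 1$, so we may solve for $\phi_{k}^{p^{m+u}}$ while imposing $a_N = 0$; this is done once per $j$, at the chosen index $N(j)$, yielding condition (iv). When $\mc{J}(N) = 0$ with $\nu_p(N) = u < e$ and $N < r_u$, the very definition of $r_u$ as the first nontrivial index of the $p$-adic layer $u$ ensures that every $c_{N,k}$ vanishes, hence the equation forces $a_N = 0$ (condition (ii)); at $N = r_u$ itself, the first nontrivial datum of the layer appears, giving condition (iii). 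For indices $N$ with $\mc{J}(N) = j$ not selected as $N(j)$, the equation uniquely forces $a_N$ from the previously determined data.

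The main obstacle will be the combinatorial verification that the relevant coefficient $c_{N,k}$ is indeed nonzero at each stage so that one can solve for $\phi_k^{p^m}$, and that the $p^m$-th root (extractable because $\nK$ is perfect) can be taken consistently. This reduces to the valuation-theoretic estimates of Section \ref{sec:discreteinvariants}, combined with an induction on $u$ within each stage $N$ to single out the correct minimizing layer. Finiteness of the set of conjugate normal forms then follows from Lemma \ref{lem:howJis}: only indices $N < pr_0/(p-1)$ are relevant for nontrivial choices of $N(j)$, so only finitely many $N(j)$ exist; combined with the finite ambiguity from the initial rescaling and from the $p^m$-th root extractions, the total number of normal forms of the form \eqref{eqn:Bextnormalform} conjugate to $f$ is finite.
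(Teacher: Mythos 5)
Your overall route is the same as the paper's: expand the conjugacy relation in the variable $y=x^{p^m}$ (using Frobenius, so that the unknown change of variables enters through $\phi_k^{p^m}$), organize the coefficient equations by the value of $\mc{J}$, obtain (i)--(iii) from the equations with $\mc{J}(n)=0$, impose $a_{N(j)}=0$ once per level $j$ and use the freed $\phi$-coefficient there, let the remaining equations force the other $a_n$, and deduce finiteness from the finitely many choices made at levels $j<r_0/(p-1)$. So the comparison reduces to whether your solving step at $n=N(j)$ actually works as described.

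There is a concrete gap there. The equation at $n=N(j)$ is not linear with a single occurrence $\phi_j^{p^{m+u}}$ carrying one nonzero coefficient that you can isolate and then remove by a $p$-power root (perfectness of $\nK$). In general \emph{every} layer $k$ with $\mc{J}_k(N(j))=j$ contributes a term $\frac{d+r_k}{p^k}\,\wt{\eps}_{r_k}\,\phi_j^{p^{m+k}}$, and several layers can attain the maximum simultaneously (e.g.\ $p=2$, $r=(3,0)$, $n=6$ gives $\mc{J}_0(6)=\mc{J}_1(6)=3$); moreover, when $d=p^e$ and $N(j)=jd$ the \emph{left-hand side} also contains the top-level unknown, linearly, as $\kronecker{d}{p^e}\phi_{n/p^e}$, so your schematic right-hand side ``polynomial in already-determined data'' is not correct in that case. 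What one must actually do is solve a one-variable equation $R(\phi_j)=(\text{known})$ with $R(z)=\sum_k R_k z^{p^{m+k}}-\kronecker{d}{p^e}\kronecker{n}{jd}\,z$; this uses algebraic closedness (not just perfectness), and it requires verifying that $R$ is not the zero polynomial. That verification is a real point: when $m=0$ the Kronecker term can collide with the $k=0$ term, and non-nullity is saved only because the degenerate configuration ($d=p^e$, $n=jd$, with $\{k:\mc{J}_k(n)=j\}=\{0\}$) would force $e=0$, $d=1$, contradicting $dp^m\geq 2$. Your ``main obstacle'' remark points in this direction but with the wrong mechanism, so the step ``solve for $\phi_k^{p^{m+u}}$ while imposing $a_N=0$'' fails as stated whenever several layers attain the maximum or the linear term is present. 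Correspondingly, the finite ambiguity in the last assertion comes from the finitely many roots of these nonzero polynomials $R$ for $j<r_0/(p-1)$ (the $\phi_j$ with $j\geq r_0/(p-1)$ and all remaining $a_n$ being then uniquely determined), not from ``$p^m$-th root extractions''; and the $\zeta^{dp^m}=\zeta$ ambiguity you invoke at the start belongs to Theorem \ref{thm:Bhard}, not to this statement, where the normalization is simply $a_0=1$ by a linear change.
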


\begin{rmk}\label{rmk:normalformse0}
Let $f:(\nK,0)\to(\nK,0)$ be a superattracting germ written as in the statement of Theorem \ref{thm:Bext}.
When $\nu_p(d)=0$, we get the classical B\"ottcher normal forms $\wt{f}(x)=x^{p^m d}$ (see \cite{sternberg:localcontractionstheorempoincare,rosay-rudin:holomorphicmaps,berteloot:methodeschangementechelles}).
This case was already known by experts, see Theorem \ref{thm:Beasy} for a direct proof.
\end{rmk}

\begin{rmk}\label{rmk:normalformse1N'}
The normal forms provided by Theorem \ref{thm:Bext} depend on the choice of $N(j)$ for $0 < j < r_0/(p-1)$.
Suppose we pick $N(j)=N'(j)$ defined in Lemma \ref{lem:Jnonempty}.
In the case when $\nu_p(d)=1$, we get normal forms \eqref{eqn:Bextnormalform} with
$$
a(y)=1+ \sumaa{\substack{r \leq n < p r_0/(p-1) \\ \nu_p(n)=0}}{0.4 cm} a_n y^n, \qquad a_r \neq 0.
$$
When $m=0$ and $d=p$, these normal forms are the one proposed in \cite{gardnerspencer:phdthesis}.
\end{rmk}

To get Theorem \ref{thm:Bhard}, we need to consider another choice for $N(j)$, defined by used a non-standard total order on $\nN$.

\begin{defi}\label{def:preceq}
We denote by $\preceq$ the total order on $\nN$ given by the lexicographic order on $(\nu_p(n) \wedge e, n)$.
\end{defi}

\begin{es}
Suppose $p=3$ and $e=2$.
Then the order $\preceq$ is given by
\begin{align*}
\nu_p(n)=0:\qquad &1 \prec 2 \prec 4 \prec 5 \prec 7 \prec 8 \prec \cdots \\
\nu_p(n)=1:\qquad &\prec 3 \prec 6 \prec 12 \prec 15 \prec 21 \prec 24 \prec \cdots \\
\nu_p(n)\geq 2:\qquad &\prec 0 \prec 9 \prec 18 \prec 27 \prec 36 \prec 45 \prec 54 \prec 63 \prec \cdots \\
\end{align*}
\end{es}

\begin{rmk}
Let $\mc{J}$ be given by Definition \ref{def:J}, and set
$$
N''(j):={\textstyle\min_\preceq}\{n\ |\ \mc{J}(n)=j\}.
$$
Notice that if $\nu_p(j+r_0)=0$, then $N''(j)=j+r_0$.

By picking $N(j)=N''(j)$ in Theorem \ref{thm:Bext}, we get a normal form \eqref{eqn:Bextnormalform} with $a(y)=\sum_n a_n y^n$ satisfying the condition $a_n=0$ for any $n \neq r_0, \nu_p(n)=0$.
In particular, we get Theorem \ref{thm:Bhard}.

In the special case when $\nu_p(d)=1$, we get normal forms \eqref{eqn:Bextnormalform} with
$$
a(y)=a_{r_0} y^{r_0} + \sumaa{\substack{0 \leq s < r_0/(p-1) \\ \nu_p(s+r_0)=0}}{0.4 cm} a_{ps} y^{ps}, \qquad a_0=1, a_{r_0} \neq 0.
$$
\end{rmk}

\begin{es}
As an example of what one can get in general, let us consider the following situation: $p=3, \nu_p(d)=2, r=(19, 12, 0)$.
The following table summarizes the values of $\mc{J}(n)$ for $n <30$.
$$
\small
\begin{array}{c|*{30}{p{0.1cm}}}
n 		& 0& 1& 2& 3& 4& 5& 6& 7& 8& 9&10&11&12&13&14&15&16&17&18&19&20&21&22&23&24&25&26&27&28&29\\
\hline
\mc{J}_0(n)	&  &  &  &  &  &  &  &  &  &  &  &  &  &  &  &  &  &  &  & $\times$ & 1& 2& 3& 4& 5& 6& 7& 8& 9&10\\
\mc{J}_1(n)	&  &  &  &  &  &  &  &  &  &  &  &  & $\times$ &  &  & 1&  &  & 2&  &  & 3&  &  & 4&  &  & 5&  &  \\
\mc{J}_2(n)	& $\times$ &  &  &  &  &  &  &  &  & 1&  &  &  &  &  &  &  &  & 2&  &  &  &  &  &  &  &  & 3&  &  \\
\hline
\mc{J}(n)	& $\times$ &  &  &  &  &  &  &  &  & 1&  &  & $\times$ &  &  & 1&  &  & 2& $\times$ & 1& 3& 3& 4& 5& 6& 7& 8& 9&10\\
\end{array}
$$
The $\times$ are associated to numbers $n$ so that $r_u=n$ for some $u$.
Here we get
$$
\{n\ |\ \mc{J}(n)=j\}
=
\begin{cases}
\{9,15,20\} & \text{if }j=1,\\
\{18\} & \text{if }j=2,\\
\{21, 22\} & \text{if }j=3,\\
\{j+19\} & \text{if }j\geq 4.
\end{cases}
$$
The case $j \geq 4$ follows directly by Remark \ref{rmk:howJis}, since
$$
\max\left\{\frac{r_0-r_1}{p-1}, \frac{r_0-r_2}{p^2-1}\right\} = \max\{3.5, 2.375\} = 3.5.
$$
Here by taking for example $m=0$, $d=3^2 \cdot 2=18$, $N(1)=15$ and $N(3)=21$, for $m=0$ we get normal forms
$$
\wt{f}(x)=x^{18}(1+a_9 x^9 + a_{12} x^{12} +a_{19} x^{19} + a_{20} x^{20} + a_{22} x^{22}).
$$
\end{es}

We conclude this section by noticing that (unlike the case of characteristic zero) not all normal forms can be obtained as the action at infinity of polynomial mappings.

\begin{cor}\label{cor:polynomials}
Let $P \in \nK[z]$ be a polynomial of degree $\geq 2$. Denote by $f$ the superattracting germ obtained by considering the action of $P$ at $\infty \in \nP^1_\nK$.
Set $d=d(f)$ and $r=r(f)$ given as in Definitions \ref{def:d} and \ref{def:r}.
Then $r_0\leq d$.
\end{cor}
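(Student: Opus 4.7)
The plan is to reduce the problem by stripping off the Frobenius factor of $P$, then to compute the relevant coefficients of $g$ by a direct expansion, and finally to identify one non-zero $\epsilon_{i^*}$ with $i^* \leq d$ by a small combinatorial non-cancellation argument.

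First, I would write $P = Q \circ F^m$ with $m = m(P)$ and $Q \in \nK[z]$ a polynomial of degree $d$ with $m(Q) = 0$. The coordinate change $z \leftrightarrow 1/x$ at infinity gives $f(x) = 1/Q(1/x^{p^m}) = g(x^{p^m})$, where $g(u) := 1/Q(1/u)$. Hence $f = g \circ F^m$, and by Definitions \ref{def:d} and \ref{def:r} we have $d(f) = d(g) = d$ and $r(f) = r(g)$. So it suffices to prove $r_0(g) \leq d$; if $p \nmid d$ this is immediate because $r_0 = 0$, so I may assume $p \mid d$.

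Next, writing $Q(z) = q_d z^d + \cdots + q_0$, a direct computation gives
$$g(u) = \frac{u^d}{q_d}\,\bigl(1 + \alpha(u)\bigr)^{-1}, \quad\text{where } \alpha(u) := \sum_{i=1}^d \beta_i u^i,\ \beta_i := q_{d-i}/q_d,$$
so that $\epsilon_n = q_d^{-1}\,[u^n](1+\alpha)^{-1}$. The assumption $m(Q) = 0$ combined with $p \mid d$ forces some $q_j \neq 0$ with $p \nmid j$ and $j < d$; hence
$$i^* := \min\{i : 1 \leq i \leq d,\ \beta_i \neq 0,\ p \nmid i\}$$
is well defined and satisfies $i^* \leq d$. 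Since $p \mid d$ and $p \nmid i^*$, we have $p \nmid d + i^*$, so it remains to show that $\epsilon_{i^*} \neq 0$.

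That last verification is the heart of the proof and the one place that needs a real idea. Expanding $(1+\alpha)^{-1} = \sum_k (-\alpha)^k$, the coefficient $[u^{i^*}]$ is a signed sum over tuples $(i_1, \ldots, i_k)$ with each $i_j$ in the support of $\alpha$ and $i_1 + \cdots + i_k = i^*$. The single-part contribution is $-\beta_{i^*} \neq 0$. For $k \geq 2$, every $i_j$ satisfies $1 \leq i_j < i^*$, so by minimality of $i^*$ each $i_j$ is divisible by $p$; but then $p \mid \sum_j i_j = i^*$, contradicting $p \nmid i^*$. Hence no multi-part composition contributes, so $\epsilon_{i^*} = -\beta_{i^*}/q_d \neq 0$ and $r_0 \leq i^* \leq d$, as required.
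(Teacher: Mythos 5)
Your proof is correct and follows essentially the same route as the paper: factor $P=Q\circ F^m$ with $Q'\not\equiv 0$, pass to the coordinate $x=1/z$ to get $f=g\circ F^m$ with $g(u)=u^d q_d^{-1}(1+\alpha(u))^{-1}$, and read off $r_0$ from the expansion of $(1+\alpha)^{-1}$. The only difference is that you spell out, via the minimality/non-cancellation argument on compositions, the step the paper dismisses as ``easily verified'' (and you only need the inequality $r_0\le i^*$ rather than the paper's exact formula for $r_0$), which is a welcome addition rather than a deviation.
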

\begin{proof}
Write $P=Q \circ F^m$, where $Q \in \nK[w]$ is such that $Q' \not \equiv 0$, and $F$ is the Frobenius automorphism.
Write $Q(w)=w^d - b_1 w^{d-1} - \ldots - b_d$.
Then in the local coordinates $x=1/z$, the germ $f$ is equal to
\begin{equation}\label{eqn:germatinfty}
P(x)=x^{p^m d}\left(1-\sum_{n=1}^d b_n x^{np^m}\right)^{-1}.
\end{equation}
From this formula, it can be easily verified that
$$
r_0 = \min\{1 \leq n \leq d\ |\ b_n \neq 0, \nu_p(n)=0\}.
$$
In particular $r_0 \leq d$.
\end{proof}

In view of Corollary \ref{cor:polynomials}, one can ask the following question.

\begin{quest}
Let $f:(\nK,0) \to (\nK,0)$ be a superattracting germ in normal form \eqref{eqn:Bextnormalform}.
Set $d=d(f)$ and $r=r(f)$ given as in Definitions \ref{def:d} and \ref{def:r}.
Suppose $r_0< d$.
Can $f$ be obtained as the action at infinity of a polynomial mapping $P \in \nK[z]$?
\end{quest}

It can be easily shown that the answer to this question is positive at least when $\deg a \leq d$ in \eqref{eqn:Bextnormalform}, so in particular when $r_0 \leq d(1-1/p)$.

\section{Analytic normal forms when $d$ and $p$ are coprime}

In this section, we prove Theorem \ref{thm:Bext} and its analytic counterpart when $\nu_p(d)=0$.

\begin{thm}\label{thm:Beasy}
Let $\nK$ be a complete normed algebraic closed field of characteristic $p > 0$.
Let $f:(\nK,0) \to (\nK,0)$ be a superattracting germ.
Set $m=m(f)$ and $d=d(f)$ as in Definitions \ref{def:m} and \ref{def:d}, and assume that $d$ and $p$ are coprime.

Then $f$ is analytically conjugate to the germ
\begin{equation}\label{eqn:Beasynormalform}
x \mapsto \big(x^{p^m}\big)^d.
\end{equation}
\end{thm}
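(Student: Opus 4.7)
The plan is to imitate B\"ottcher's classical iteration in the non-archimedean setting. First I would normalize: a linear change $x \mapsto \lambda x$ with $\lambda^{D-1}$ chosen to absorb the leading coefficient (where $D := p^m d$; possible since $\nK$ is algebraically closed) reduces us to $f(x) = x^D(1+\epsilon(x))$ with $\epsilon(0) = 0$. Since $f = g \circ F^m$, $\epsilon$ is a power series in $x^{p^m}$, and an induction using $f^{n+1}(x) = f^n(x)^D(1+\epsilon(f^n(x)))$ together with the characteristic-$p$ identity $\big(\sum a_k x^{k p^{mn}}\big)^{p^m} = \sum a_k^{p^m} x^{k p^{m(n+1)}}$ shows that $f^n(x)$ is a power series in $x^{p^{mn}}$; write $f^n(x) = x^{D^n} V_n(x)$ with $V_n(0) = 1$.

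I then define the B\"ottcher approximants $\Phi_n(x) := x \cdot V_n(x)^{1/D^n}$, where the $D^n$-th root is extracted in two stages: the $p^{mn}$-th root by taking coefficient-wise $p^{mn}$-th roots of $V_n$ (well-defined because $\nK$ is perfect and $V_n$ lives in $x^{p^{mn}}$), followed by the $d^n$-th root given by Remark \ref{rmk:rootwelldefined} (valid since $d^n$ is coprime to $p$). Automatically $\Phi_n(x) = x + O(x^2)$, and a direct computation yields the telescoping identity
\[
\Phi_{n+1}(x) = \Phi_n(x) \cdot (1 + \epsilon(f^n(x)))^{1/D^{n+1}}
\]
together with the approximate conjugacy $\Phi_n \circ f = \Phi_{n+1}^D$ (obtained by comparing $D^n$-th powers of both sides, with the principal branch selected by matching leading coefficients). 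On a ball $|x| \leq r < 1$ small enough that $|\epsilon(x)| < 1$, one has $|f^n(x)| = |x|^{D^n}$, and combining the bound $|(1+u)^{1/d^{n+1}} - 1| \leq |u|$ from Remark \ref{rmk:rootwelldefined} with the pointwise identity $|W^{1/p^{m(n+1)}}(x)| = |W(x)|^{1/p^{m(n+1)}}$ (coming from the coefficient-wise $p^{m(n+1)}$-th root in a perfect field) gives
\[
|(1+\epsilon(f^n(x)))^{1/D^{n+1}} - 1| \leq C^{1/p^{m(n+1)}} |x|^{d^n},
\]
which tends to $0$ uniformly on the ball when $d \geq 2$. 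Proposition \ref{prop:convergenceiseasy} then yields convergence of the product and the existence of an analytic limit $\Phi = \lim \Phi_n$ with $\Phi(x) = x + O(x^2)$, satisfying the B\"ottcher equation $\Phi \circ f = \Phi^D$.

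The main technical obstacle is balancing the inflation of norms produced by the $p^{m(n+1)}$-th root extraction against the super-exponential decay of $f^n$ at the origin: the crucial simplification $\rho D^n/p^{m(n+1)} \geq d^n$ (with $\rho \geq p^m$ the order of $\epsilon$) rests on the $x^{p^{mn}}$-structure of $f^n$, and it succeeds precisely when $d \geq 2$. In the degenerate case $d = 1$, where the target is the pure Frobenius iterate $x^{p^m}$ and the above iteration fails to stabilize, one instead solves the equivalent functional equation $\Phi \circ g = T^m\Phi$ directly by recursion: the coefficients $\phi_n$ of $\Phi$ then satisfy Artin-Schreier-type equations $\phi_n^{p^m} - \phi_n = (\text{polynomial in earlier } \phi_k\text{ and } \epsilon\text{-coefficients})$, always solvable in the algebraically closed $\nK$ with $|\phi_n| \leq 1$, so that $\Phi$ is analytic and conjugates $f$ to $x^{p^m}$.
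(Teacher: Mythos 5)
Your argument is correct in substance, and for $d\geq 2$ it is essentially the paper's proof in different clothing: the paper also solves the B\"ottcher equation by an infinite product, $\phi(y)=\prod_{n\geq 0}\big(1+\eps^{(n)}(y)\big)^{d^{-n-1}}$ with $T^m\eps^{(0)}=\eps$ and $T^m\eps^{(n+1)}=\eps^{(n)}\circ g$, which after unwinding the operator $T$ is exactly your telescoped product $\prod_n\big(1+\eps(f^n(x))\big)^{1/D^{n+1}}$; in both versions the root is extracted in two stages, a coefficientwise $p$-power root (perfectness of $\nK$) and a prime-to-$p$ binomial root (Remark \ref{rmk:rootwelldefined}), and convergence comes from the ultrametric criterion of Propositions \ref{prop:convergenceiseasy} and \ref{prop:prodconverges}. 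The genuine difference is your separate treatment of $d=1$ (which the theorem does allow, with $m\geq 1$). The paper does not split this case: it asserts a uniform bound $\abs{\eps^{(n)}(y)}\leq M\gamma^n\abs{y}$, but the real source of decay is the growth of the order of vanishing, $\on{ord}_0\eps^{(n)}\geq d^n$, i.e. exactly your exponent $\abs{x}^{d^n}$, and this gain is absent when $d=1$: there the coefficientwise $p^{m(n+1)}$-th roots push the coefficient norms back up (the linear coefficient of $\eps^{(n)}$ has norm tending to a positive constant), the factors do not tend to $1$, and the product construction does not apply. So your fallback, solving $\Phi\circ g=T^m\Phi$ coefficient by coefficient via equations of the form $\phi_n^{p^m}-C^n\phi_n=P_n(\phi_1,\ldots,\phi_{n-1})$, all of whose roots have norm at most $1$ once $\abs{C}\leq 1$ and the coefficients of $\eps$ are scaled into the unit ball, is not a redundant precaution but a needed complement, and it is a clean self-contained argument. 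Two small points to tighten: the telescoping identity and $\Phi_n\circ f=\Phi_{n+1}^D$ rest on the fact that a $D^{n+1}$-th root with constant term $1$ is unique in $\nK[[x]]$ (injectivity of Frobenius plus $d$ invertible in $\nK$), which is what makes your chosen branches automatically compatible with products and composition; and in the $d=1$ recursion you should either keep the leading coefficient $C$ in the equation as above, or, if you normalize $C=1$, accept a geometric bound $\abs{\phi_n}\leq B^n$, since that normalization may force some coefficients of $\eps$ to have norm larger than $1$; either way $\Phi$ has positive radius of convergence and $\phi_1\neq 0$, so the conclusion stands.
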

\begin{proof}
Set $y=x^{p^m}$, and
\begin{equation}\label{eqn:Beasystart}
f(x)=g(y)=C y^d (1+\eps(y)),
\end{equation}
with $C \neq 0$ and $\eps:(\nK,0) \to (\nK,0)$ an analytic germ with $\eps(0) = 0$.

We want to find a conjugacy between $f$ and $\wt{f}(x) = C (x^{p^m})^d = C y^d$.

Up to linear conjugacy, we can suppose:
\begin{itemize}
\item $\abs{C} < 1$, and there exists $0<\alpha<1$ such that $\abs{g(y)} \leq \alpha \abs{y}$ for $\abs{y} \le 1$;
\item $\abs{\frac{d\eps}{dy}(0)}<1$, and there exists $0 < \beta < 1$ such that $\abs{\eps(y)} \leq \beta \abs{y}$ for $\abs{y} \le 1$.
\end{itemize}

Let us consider a local diffeomorphism of the form
$$
\Phi(x)=x\phi(x),
$$
with $\phi(0)=1$.

Considering the conjugacy relation $\Phi \circ f = \wt{f} \circ \Phi$, we get
\begin{align*}
\Phi \circ f(x)&= C y^d\big(1+\eps(y)\big)\phi \circ g(y)\\
\wt{f} \circ \Phi(x)&= C y^d \big(\phi(x)\big)^{dp^m}= C y^d (T^m\phi(y))^d,
\end{align*}
where $T$ is the operator defined by \eqref{eqn:defT}.
In particular we have to solve
\begin{equation}\label{eqn:Beasytosolve}
\big(1+\eps(y)\big)\phi \circ g(y)
= (T^m\phi(y))^d.
\end{equation}
A solution to this equation is given by the formal product
\begin{equation}\label{eqn:Beasysolution}
\phi(y)=\prod_{n=0}^\infty \big(1+\eps^{(n)}(y)\big)^{d^{-n-1}},
\end{equation}
where $\eps^{(n)}:(\nK,0) \to (\nK,0)$ are analytic germs satisfying the relations
\begin{equation}\label{eqn:defeps}
T^m\eps^{(0)}=\eps, \qquad T^m \eps^{(n+1)} = \eps^{(n)} \circ g \mbox{ for } n \geq 0.
\end{equation}
Notice that the single factor $\big(1+\eps^{(n)}(y)\big)^{d^{-n-1}}$ is well defined, since the equations in \eqref{eqn:defeps} have always solutions, and $d$ and $p$ are coprime (see Remark \ref{rmk:rootwelldefined}).

Let us now show that the formal product converges, thus defining an analytic change of coordinates.

Proceeding by induction on $n$, when $m=0$ we infer
$$
\abs{\eps^{(n)}(y)} \leq \beta \alpha^n \abs{y}
$$
when $\abs{y} \ll 1$.
When $m > 0$, we get
$$
\abs{\eps^{(n)}(y)} \leq M \gamma^n \abs{y},
$$
where
$$
M = \left(\alpha^{(1-p^{-m})^{-1}} \beta\right)^{p^{-m}}, \qquad \gamma = \beta^{p^{-m}}<1.
$$
In both cases, $d^{-n} \abs{\eps^{(n)}(y)} \to 0$ when $n \to +\infty$ for $\abs{y} \ll 1$.
By Proposition \ref{prop:prodconverges}, the product in \eqref{eqn:Beasysolution} converges for $\abs{y}$ small enough.

Hence $f$ and $\wt{f}:x \mapsto Cx^{p^m d}$ are analytically conjugate.
Up to a linear change of coordinates, we can now get $C=1$.
\end{proof}


\section{Proof of Theorem \ref{thm:Bext}}

This section is completely devoted to the proof of Theorem \ref{thm:Bext}.

Write $f$ under the form $g \circ F^m$, where
$$
g(y)=y^d\big(1+\eps(y)\big), \qquad 1+\eps(y)=\sum_{n=0}^\infty \eps_n y^n,
$$
with
\begin{enumerate}[(i)]
\item $\eps_0 = 1$;
\item for any $0 \leq u < e$, $n < r_u$ with $\nu_p(n) = u$, then $\eps_n=0$;
\item for any $0 \leq u < e$, then $\eps_{r_u} \neq 0$.
\end{enumerate}
Condition (i) can be achieved up to a linear change of coordinates, while conditions (ii--iii) follow directly from the Definition \ref{def:r} of $r=r(f)$.

Let $\wt{f}$ be another superattracting germ of the form $\wt{f}=\wt{g} \circ F^m$, with
$$
\wt{g}(y)=y^d\big(1+\wt{\eps}(y)\big), \qquad 1+\wt{\eps}(y)=\sum_{n=0}^\infty \wt{\eps}_n y^n,\quad \wt{\eps}_0=1.
$$
Consider a change of coordinates $\Phi\in \nK[[x]]$ of the form
$$
\Phi(x)=x\phi(x), \qquad \phi(x)= \sum_{n=0}^\infty \phi_n x^n, \quad \phi_0=1.
$$

Considering the conjugacy relation $\Phi \circ f = \wt{f} \circ \Phi$, we get
\begin{align*}
\Phi \circ f(x) &= y^d \big(1+\eps(y)\big) \phi\Big(y^d \big(1+\eps(y)\big)\Big),\\
\wt{f} \circ \Phi(x) &= \big(x\phi(x)\big)^{dp^m}\Big(1+ \wt{\eps}\big(x\phi(x)\big)\Big) = y^d \big(T^m\phi(y)\big)^d\Big(1+ \wt{\eps}\big(y T^m\phi(y)\big)\Big),
\end{align*}
where the operator $T$ is defined by \eqref{eqn:defT}.
In particular we have to solve
\begin{equation}\label{eqn:Btosolve}
\big(1+\eps(y)\big) \phi\Big(y^d \big(1+\eps(y)\big)\Big)
=\big(T^m\phi(y)\big)^d \Big(1+\wt{\eps}\big(y T^m\phi(y)\big)\Big).
\end{equation}
We recall that the unknowns of this equation are $\phi$ and $\wt{\eps}$, while $\eps$ is the datum.
We now develop both sides of \eqref{eqn:Btosolve} in formal power series.

Denote by $\lhs(y)=\sum_n \lhs_n y^n$ and $\rhs(y)=\sum_n \rhs_n y^n$ the left hand side and right hand side of \eqref{eqn:Btosolve}.
We first need a few elementary lemmas that will help the needed computations.

\begin{lem}\label{lem:easy1}
Let $\psi(y) = \sum_{n=0}^\infty \psi_n y^n \in \nK[[y]]$ be a formal power series, and $h \in \nN$.
Then
$$
\big(\psi (y)\big)^h=\sum_{J \in \nN^h} \psi_J y^{\abs{J}} = \sum_{n=0}^\infty \Bigg(\sumab{J \in \nN^h,\ \abs{J}=n}{0.0cm}{0.4cm} \psi_J\Bigg) y^n,
$$
where if $J=(j_1 \ldots, j_h)$, we set $\psi_J=\psi_{j_1} \cdots \psi_{j_h}$ and $\abs{J}=j_1 + \cdots + j_h$.
\end{lem}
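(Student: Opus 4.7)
The statement is purely combinatorial: it is the standard expansion of an $h$-th power of a formal power series as a sum indexed by $h$-tuples of exponents. The plan is to proceed by induction on $h$, the exponent.

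For the base case $h = 1$, the identity reduces to $\psi(y) = \sum_{j_1 \in \nN} \psi_{j_1} y^{j_1}$, which is the definition of $\psi$, and $\nN^1 \cong \nN$ with $\abs{(j_1)} = j_1$, $\psi_{(j_1)} = \psi_{j_1}$.

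For the inductive step, assuming the identity for $h$, I would compute
\begin{align*}
(\psi(y))^{h+1} &= (\psi(y))^h \cdot \psi(y)
= \Bigg(\sum_{J' \in \nN^h} \psi_{J'} y^{\abs{J'}}\Bigg) \cdot \Bigg(\sum_{j_{h+1} \in \nN} \psi_{j_{h+1}} y^{j_{h+1}}\Bigg)\\
&= \sum_{J' \in \nN^h}\sum_{j_{h+1}\in \nN} \psi_{J'}\psi_{j_{h+1}} y^{\abs{J'}+j_{h+1}}
= \sum_{J \in \nN^{h+1}} \psi_J y^{\abs{J}},
\end{align*}
by identifying $(J',j_{h+1}) \in \nN^h \times \nN$ with $J=(j_1,\ldots,j_{h+1}) \in \nN^{h+1}$ and noting that $\psi_{J'}\psi_{j_{h+1}} = \psi_J$ and $\abs{J'}+j_{h+1}=\abs{J}$. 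This gives the first equality in the statement.

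The second equality is just a regrouping: for each fixed $n \in \nN$, collect all tuples $J$ with $\abs{J} = n$, noting that $\{J \in \nN^h : \abs{J}=n\}$ is finite so the inner sum is a finite sum of coefficients. The rearrangement is legitimate in $\nK[[y]]$ because each degree $n$ receives contributions from only finitely many multi-indices. The only minor check is that this double sum is well-defined as a formal power series, which holds because the reindexing within a fixed total degree involves only finitely many summands. There is essentially no main obstacle here — the lemma is a bookkeeping identity used to expand powers of $\phi$ and $\eps$ in the conjugacy equation \eqref{eqn:Btosolve}.
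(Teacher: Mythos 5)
Your proof is correct and fills in, by a routine induction on $h$ plus regrouping by total degree, exactly the standard expansion that the paper simply declares trivial (it proves Lemmas \ref{lem:easy1} and \ref{lem:easy2} together and dismisses the first one in one line). Nothing further is needed.
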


\begin{lem}\label{lem:easy2}
Let $\psi(y) = \sum_{n=0}^\infty \psi_n y^n \in \nK[[y]]$ be a formal power series, $h, n \in \nN$ such that $\nu_p(h)>\nu_p(n)$.
Then
\begin{equation}\label{eqn:easyimp}
\sumaa{\substack{J \in \nN^h\\\abs{J}=n}}{0.1cm} \psi_J = 0.
\end{equation}
\end{lem}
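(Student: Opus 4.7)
The plan is to exploit a cyclic symmetry. Let $C_h=\langle\sigma\rangle$ denote the cyclic group of order $h$ acting on $\nN^h$ by cyclic shift, say $\sigma(j_1,\ldots,j_h)=(j_h,j_1,\ldots,j_{h-1})$. This action preserves the finite set $S_n:=\{J\in\nN^h:\abs{J}=n\}$, and crucially it preserves the value $\psi_J=\psi_{j_1}\cdots\psi_{j_h}$, because multiplication in $\nK$ is commutative. Hence I can decompose the sum in \eqref{eqn:easyimp} into orbit sums under $C_h$, and it suffices to show that each orbit sum vanishes in characteristic $p$.

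For a fixed $J\in S_n$, let $d$ be its period, i.e.\ the smallest positive integer with $\sigma^d J=J$. Standard facts about cyclic group actions give $d\mid h$, the stabilizer of $J$ has order $h/d$, and the orbit of $J$ has size $d$. From $d$-periodicity, $J$ is the concatenation of $h/d$ copies of $(j_1,\ldots,j_d)$, so
$$
n=\abs{J}=\frac{h}{d}(j_1+\cdots+j_d),
$$
which forces $(h/d)\mid n$. Therefore $\nu_p(h/d)\leq\nu_p(n)$, i.e.\ $\nu_p(d)\geq\nu_p(h)-\nu_p(n)\geq 1$, so $p$ divides $d$.

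The orbit sum is $d\cdot\psi_J$, and since $\nK$ has characteristic $p$ and $p\mid d$, this is zero. Summing over all orbits yields the claim.

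The argument is essentially the counting lemma behind Fermat's little theorem (or the combinatorial proof that $\binom{h}{n}\equiv 0\pmod p$ when $\nu_p(h)>\nu_p(n)$ for the special $\psi=(1+y)$), adapted to the setting of arbitrary coefficients $\psi_n$; the only thing that needs care is verifying that the period $d$ really divides both $h$ (by definition) and that $h/d$ divides $n$, which is where the $p$-adic hypothesis gets used. No other obstacle is foreseeable.
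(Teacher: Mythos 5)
Your argument is correct, but it takes a genuinely different route from the paper's. You run the classical necklace (orbit-counting) argument: decompose $\{J\in\nN^h : \abs{J}=n\}$ into orbits under the cyclic shift, note that $\psi_J$ is constant on each orbit, and show that the hypothesis $\nu_p(h)>\nu_p(n)$ forces every orbit length $d$ to be divisible by $p$ — since $d$-periodicity gives $n=\tfrac{h}{d}(j_1+\cdots+j_d)$, so $\tfrac{h}{d}\mid n$ and hence $\nu_p(d)\geq\nu_p(h)-\nu_p(n)\geq 1$ — so each orbit contributes $d\,\psi_J=0$ in characteristic $p$. The paper instead identifies the sum \eqref{eqn:easyimp} as the coefficient of $y^n$ in $\psi(y)^h$ and invokes the Frobenius: with $k=\nu_p(h)$ one has $\psi(y)^h=\big(T^k\psi(y^{p^k})\big)^{h/p^k}$, a power series in $y^{p^k}$, so every coefficient in a degree $n$ with $\nu_p(n)<k$ vanishes. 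Each approach buys something: the paper's proof is a two-line reduction to the operator $T$ of Proposition \ref{prop:defT} and stays within the generating-function viewpoint used throughout the proof of Theorem \ref{thm:Bext}; yours is elementary and self-contained (it is the generalization, to arbitrary coefficients $\psi_n$, of the orbit-counting proof that $p\mid\binom{h}{n}$ when $\nu_p(h)>\nu_p(n)$) and gives the slightly finer information that the cancellation occurs orbit by orbit. Your intermediate steps (orbit size equals the period $d$, $d\mid h$, $\tfrac{h}{d}\mid n$) are all justified; the only case your setup does not literally cover, $h=0$, is trivial since the sum is then empty.
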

\begin{proof}[Proof of Lemmas \ref{lem:easy1} and \ref{lem:easy2}]
The proof of the first lemma is trivial.
To prove the second lemma, it suffices to notice that the sum in \eqref{eqn:easyimp} gives the term of degree $n$ of $\psi^h(y)$.
Set $k=\nu_p(h)$. Then $\psi^h(y)= (T^k\psi(y^{p^k}))^{h/p^k}$ depends only on $y^{p^k}$, hence any term of degree $n$ with $\nu_p(n)<k$ is zero.
\end{proof}

We can now come back to the proof of Theorem \ref{thm:Bext}.
By expressing the left and right hand sides of \eqref{eqn:Btosolve} in formal power series, and using Lemma \ref{lem:easy1} and Proposition \ref{prop:defT}, we get
\begin{align*}
\lhs(y)
&= \big(1+\eps(y)\big) \phi\big(y^d (1+\eps(y))\big) 
= \sum_j \phi_j y^{dj} \big(1+\eps(y)\big)^{j+1}
= \sum_j \phi_j y^{dj} \sum_{J \in \nN^{j+1}} \eps_J y^{\abs{J}},\\
\rhs(y)
&= \big(T^m\phi(y)\big)^d\Big(1+ \wt{\eps}\big(y T^m\phi(y)\big)\Big) 
= \sum_i \wt{\eps}_i y^i \big(T^m\phi(y)\big)^{d+i}
= \sum_j \wt{\eps}_i y^i \sum_{I \in \nN^{d+i}} \phi_I^{p^m} y^{\abs{I}}.\\
\end{align*}
Again by Lemma \ref{lem:easy1} we infer
\begin{equation} \label{eqn:Bcoeff}
\lhs_n = \sumaa{\substack{j \geq 0, J \in \nN^{j+1}\\ dj + \abs{J}=n}}{0.5cm} \phi_j \eps_J,
\qquad\qquad
\rhs_n = \sumaa{\substack{i \geq 0, I \in \nN^{d+i}\\ i + \abs{I}=n}}{0.5cm} \wt{\eps}_i \phi_I^{p^m}.
\end{equation}

To analyze the combinatorics of the equations $\lhs_n=\rhs_n$ we need a few preliminary lemmas.

\begin{lem}\label{lem:BextRinvariant}
The equations $\rhs_n=\lhs_n$ for $\mc{J}(n)=0$ admit a unique solution $\wt{\eps}_n =\eps_n$.
In particular, we infer $\wt{\eps}_{r_u}= \eps_{r_u}$ for $0 \leq u \leq e$, $\wt{\eps}_n=0$ otherwise, and the sequence $r=(r_u)_u$ introduced in Definition \ref{def:r} is an invariant of conjugacy.
\end{lem}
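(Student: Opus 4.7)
The plan is to induct on $n$ in increasing order through $\{n:\mc{J}(n)=0\}$. By Lemma \ref{lem:howJis}(i) these indices are exactly $n=0$ and $n$ with $\nu_p(n)=u<e$ and $n\le r_u$; conditions (i)--(iii) on $\eps$ give $\eps_0=1$, $\eps_{r_u}\neq 0$ for $0\le u<e$, and $\eps_n=0$ at all remaining such $n$. The base case $n=0$ is immediate by inspection of constant terms: only $(j=0,\,J=(0))$ contributes to $\lhs_0$ and only $(i=0,\,I=(0,\dots,0))$ contributes to $\rhs_0$, giving $\phi_0\eps_0=1=\wt{\eps}_0$.

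For the inductive step, fix $n>0$ with $\mc{J}(n)=0$ and set $u=\nu_p(n)<e$. The strategy is to show separately that $\rhs_n=\wt{\eps}_n$ and $\lhs_n=\eps_n$; both rely on the non-increasing property of $r$ combined with $n\le r_u$.

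For $\rhs_n$, the summand $i=n$ contributes $\wt{\eps}_n$, while the summand $i=0$ vanishes via Lemma \ref{lem:easy2} since $\nu_p(d)=e>u$. For $0<i<n$, the coefficient factor is the $y^{n-i}$-coefficient of $(T^m\phi)^{d+i}$, which vanishes by Lemma \ref{lem:easy2} whenever $\nu_p(d+i)>\nu_p(n-i)$. A case split on $\nu_p(i)$ shows that the only way this strict inequality could fail is $\nu_p(i)=u'\le u$; but $\mc{J}(i)>0$ with $u'\le u$ would force $i>r_{u'}\ge r_u\ge n$, contradicting $i<n$. For $\mc{J}(i)=0$: either $i=r_v<n$, which forces $v>u$ and hence $\nu_p(n-r_v)=u<v=\nu_p(d+r_v)$ so Lemma \ref{lem:easy2} applies, or $i\neq r_v$ and the inductive hypothesis gives $\wt{\eps}_i=\eps_i=0$.

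For $\lhs_n$, the summand $j=0$ contributes $\eps_n$. For $j\ge 1$, write $j+1=p^vq$ with $\gcd(q,p)=1$. The characteristic-$p$ Frobenius identity $(1+\eps)^{p^v}=1+\eps^{p^v}$ reduces the coefficient factor to $\sum_{K\in\nN^q,\,|K|=N}\prod_l\eps_{K_l}^{p^v}$ with $N=(n-dj)/p^v$ (the coefficient vanishes outright unless $p^v\mid n-dj$, i.e., $v\le u$, using $\nu_p(n-dj)=u$). The decisive step is a valuation cascade: since $\nu_p(N)=u-v\le u$, in any tuple $K$ it is impossible for all positive components to satisfy $\nu_p(K_l)>u$ (else $\nu_p(\sum K_l)>u\ge\nu_p(N)$). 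Hence some $K_l>0$ has $\nu_p(K_l)=v'\le u$; then $0<K_l\le N<n\le r_u\le r_{v'}$, so the minimality encoded in Definition \ref{def:r} forces $\eps_{K_l}=0$, killing the product. Thus every $j\ge 1$ contribution dies and $\lhs_n=\eps_n$.

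Combining the two sides, $\lhs_n=\rhs_n$ collapses to $\wt{\eps}_n=\eps_n$, closing the induction. Invariance of $r$ under conjugacy is then immediate: the non-zero positions of $\wt{\eps}$ among $\{n:\mc{J}(n)=0\}$ match those of $\eps$, namely $\{r_u:0\le u\le e\}$, so $r(\wt f)=r(f)$. The main obstacle will be stating and verifying the valuation cascade cleanly, since it interleaves the Frobenius reduction $(1+\eps)^{j+1}=(1+\eps^{p^v})^q$, $p$-adic bookkeeping on $|K|$ and individual $K_l$, and the minimality clause of Definition \ref{def:r}; the $\rhs_n$ side is comparatively routine because Lemma \ref{lem:easy2} together with the inductive hypothesis handles every surviving term directly.
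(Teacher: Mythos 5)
Your proof is correct and follows essentially the same route as the paper's: induction over the indices with $\mc{J}(n)=0$, showing separately that $\lhs_n=\eps_n$ and $\rhs_n=\wt{\eps}_n$ by combining Lemma \ref{lem:easy2} with the vanishing of $\eps_i$ for $i<r_{\nu_p(i)}$ coming from Definition \ref{def:r}. The only differences are cosmetic (the usual order on $\nN$ instead of $\preceq$, and a Frobenius factorization of $(1+\eps)^{j+1}$ in place of the paper's direct valuation analysis of the multi-index $J$), and your sub-case $i=r_v<n$ inside the branch $\nu_p(i)\le u$ is in fact vacuous (it would force $r_{\nu_p(i)}\le i<n\le r_u\le r_{\nu_p(i)}$), so the slight slip there ($\nu_p(d+r_v)=v$ is incompatible with $\nu_p(i)\le u$) affects nothing.
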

\begin{proof}
We proceed by induction on $n$ such that $\mc{J}(n)=0$, with respect to the order $\preceq$.
Recall that by Lemma \ref{lem:howJis}, we have
$$
\{n\ |\ \mc{J}(n)=0\} = \{n \in \nN\ |\ \nu_p(n)=u < e, n \leq r_u\} \cup \{0\}.
$$
For $n=0$, the statement is trivial.

Suppose we proved the statement for any $n' \prec n$ with $\mc{J}(n')=0$.
Consider the equation $\lhs_n=\rhs_n$.

We first show $\lhs_n=\eps_n$.
In the sum defining $\lhs_{n}$ in \eqref{eqn:Bcoeff}, we have the condition $dj+\abs{J}=n$.
In particular $\abs{J} \leq n < r_{u'}$ for any $u'<u$.

Write $J=(J_0, \ldots, J_j)$.
If there exists $h$ such that $\nu_p(J_h)< u$, then by induction hypothesis $\eps_{J_h}=0$, hence $\eps_J=0$.

Suppose now $\nu_p(J_h) \geq u$ for any $h$.
Since $\nu_p(d)=e > u=\nu_p(n)$, we infer $\nu_p(n-dj)=u$.
It follows that there exists $h$ such that $\nu_p(J_h)=u$. If $J_h < n$, by induction hypothesis $\eps_{J_h}=0$, hence $\eps_J=0$.
If $J_h=n$, then $j=0$, and we get $\lhs_n=\phi_0\eps_n=\eps_n$.

We now show that $\rhs_n=\wt{\eps}_n$, and conclude the proof.

Suppose $\nu_p(i)>u$.
Since $\nu_p(n)=u$, by Lemma \ref{lem:easy2} we infer $\displaystyle \sumaa{I \in \nN^{d+i}, \abs{I}=n-i}{0.6cm} \phi_I^{p^m} = 0$.

Suppose $\nu_p(i)\leq u$.
We can suppose $i \preceq n$.
If $i \prec n$, by induction hypothesis we get $\wt{\eps}_i=0$.
If $i = n$, then $\abs{I}=0$, and we get $\rhs_n=\wt{\eps}_n \phi_0^{p^m d}= \wt{\eps}_n$.
\end{proof}

\begin{lem}\label{lem:Bextrhslot}
For any $n \in \nN$, we have
$$
\rhs_n = \wt{\eps}_n + \sum_{k=0}^{e \wedge u} \left(\frac{d+r_k}{p^k}\right) \wt{\eps}_{r_k} \phi_{(n-r_k)/p^k}^{p^{m+k}} +
\sum_{k=0}^{e \wedge u} Q_k\Big(\phi_h, \wt{\eps}_i\ \big|\ h<(n-r_k)/p^k; \nu_p(i) \wedge e = k, i<n\Big),
$$
where $u=\nu_p(n)$, and $Q_k$ denotes a suitable polynomial that depends on $\phi$ and $\wt{\eps}$ as indicated.
Here we set $\phi_j=0$ whenever $j \not \in \nN$. 
\end{lem}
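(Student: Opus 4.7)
The plan is to expand the formula for $\rhs_n$ from \eqref{eqn:Bcoeff} and to isolate, for each value of $k := \nu_p(i) \wedge e$, the term with largest $\phi$-subscript. First I would peel off the term $i = n$, which gives $\wt{\eps}_n \phi_0^{p^m(d+n)} = \wt{\eps}_n$ (using $\phi_0 = 1$). For $i < n$ I partition the sum by $k$. For each such $i$, the inner sum equals the coefficient of $y^{n-i}$ in $(T^m\phi(y))^{d+i}$; setting $v := \nu_p(d+i)$, the Frobenius identity in characteristic $p$ gives
\[
(T^m\phi(y))^{d+i} = \big((T^{m+v}\phi)(y^{p^v})\big)^{(d+i)/p^v},
\]
so the coefficient vanishes unless $p^v \mid n-i$, and when nonzero its leading $\phi$-subscript term is $\frac{d+i}{p^v}\,\phi_{(n-i)/p^v}^{p^{m+v}}$ (all other terms involving only $\phi_h$ with $h < (n-i)/p^v$).

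Next I treat three ranges. For $k > u := \nu_p(n)$ a direct valuation count gives $\nu_p(n-i) = u < k \leq v$, so Lemma \ref{lem:easy2} kills the inner sum and no such $i$ contributes. For $0 \leq k \leq e \wedge u$ I split the $i$-sum by comparing with $r_k$: for $i < r_k$ with $\nu_p(i) = k < e$, Lemma \ref{lem:BextRinvariant} combined with the normalization $\eps_n = 0$ on such indices forces $\wt{\eps}_i = 0$ (the case $k = e$ is vacuous since $r_e = 0$); for $i = r_k$ one checks $v = k$ ($\nu_p(d) = e$ and $\nu_p(r_k) = k$ when $k < e$; $r_e = 0$ when $k = e$), and $p^k \mid n - r_k$ holds because $u \geq k$, so the extracted leading term is exactly $\wt{\eps}_{r_k}\,\frac{d+r_k}{p^k}\,\phi_{(n-r_k)/p^k}^{p^{m+k}}$; for $i > r_k$ with $\nu_p(i) \wedge e = k$ and $i < n$, the subscript $(n-i)/p^v$ is bounded by $(n-i)/p^k < (n-r_k)/p^k$.

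Summing these contributions produces the stated identity, with $Q_k$ collecting both the sub-leading $\phi$-terms in the $i = r_k$ coefficient and the full contributions of all other $i$ in the slice $\nu_p(i) \wedge e = k$, $i < n$. The main subtlety, which I expect to be the only genuine obstacle, is the case $k = e$: here $\nu_p(d+i)$ can strictly exceed $e$ by cancellation between $d$ and $i$, shrinking the leading $\phi$-subscript below $n/p^e$; this only strengthens the bound $h < n/p^e = (n-r_e)/p^e$ demanded by $Q_e$, but care is needed to first isolate the $i = 0 = r_e$ contribution (for which $v$ is forced to equal $e$) from the generic $i$ with $\nu_p(i) \geq e$.
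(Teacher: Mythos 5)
Your proposal is correct and follows essentially the same route as the paper: peel off (or leave implicit) the $i=n$ term, slice the remaining sum by the capped valuation $\nu_p(i)\wedge e$, kill the slices with $k>u$ via Lemma \ref{lem:easy2}, use Lemma \ref{lem:BextRinvariant} to discard $i<r_k$, and extract the leading index $(n-r_k)/p^k$ at $i=r_k$ — your Frobenius identity $(T^m\phi)^{d+i}=\big((T^{m+v}\phi)(y^{p^v})\big)^{(d+i)/p^v}$ is exactly the paper's regrouping of the multi-index sum over $\nN^{d+i}$ into one over $\nN^{(d+i)/p^k}$. The extra care you take with the $k=e$ slice (where $\nu_p(d+i)$ may exceed $e$) is a refinement of, not a departure from, the paper's argument.
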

\begin{proof}
Set $k=\nu_p(i)$.
For any $k<e$ and $i < r_k$, by Lemma \ref{lem:BextRinvariant} we get $\wt{\eps}_i=0$.

If $u < e$ and $k > u$, then by Lemma \ref{lem:easy2} we get
$$
\sumaa{\substack{J \in \nN^{d+i}\\ \abs{J}=n-i}}{0.1cm} \phi_J^{p^m} = 0,
$$
and $\rhs_n$ does not depend on $\wt{\eps}_i$.

Suppose $k \leq u \wedge e$.
Notice that
$$
\sumaa{\substack{J \in \nN^{d+i}\\ \abs{J}=n-i}}{0.1cm} \phi_J^{p^m} = 
\sumaa{\substack{H \in \nN^{(d+i)/p^k}\\ \abs{H}=(n-i)/p^k}}{0.3cm} \phi_H^{p^{m+k}}.
$$
It follows that the highest order term that can appear in $\rhs_n$ depending on $\wt{\eps}_i$ with $\nu_p(i)=k \leq u \wedge e$ is obtained when $i=r_k$, and given by
\begin{equation}\label{eqn:Bextrhscoeff}
\left(\frac{d+r_k}{p^k}\right)\wt{\eps}_{r_k} \phi_{(n-r_k)/p^k}^{p^{m+k}}.
\end{equation}
Notice that $\frac{d+r_k}{p^k}\wt{\eps}_{r_k} \neq 0$ for any $0 \leq k \leq u \wedge e$, and $d/p^k=0$ as an element of $\nK$.

If $u<e$ we are done.
If $u \geq e$, then $r_u=0$. It follows that the highest order term that appear in $\rhs_n$ depending on $\wt{\eps}_i$ for $\wt{\eps}_i \geq e$ is still given by \eqref{eqn:Bextrhscoeff} with $k=e$, and we are done.
\end{proof}

\begin{lem}\label{lem:Bextlhslot}
For any $n \in \nN$, we have
$$
\lhs_n = \kronecker{d}{p^e} \phi_{n/p^e} + P\big(\phi_j\ |\ j < (n-r_u)/p^u,\ u = \nu_p(n) \wedge e \big),
$$
where $\kronecker{}{}$ denotes the Kronecker delta function, and $P$ denotes a suitable polynomial that depends on $\phi$ as indicated.
Again, we set $\phi_j=0$ whenever $j \not \in \nN$. 
\end{lem}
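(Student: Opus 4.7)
The plan is to expand $\lhs(y)=\sum_{j\geq 0}\phi_j\, y^{dj}(1+\eps(y))^{j+1}$ term by term, applying Lemma \ref{lem:easy1} to $(1+\eps)^{j+1}$, so that
$$\lhs_n=\sum_{j\geq 0}\phi_j\,c_{j,n-dj},\qquad c_{j,s}:=\sumaa{\substack{J\in\nN^{j+1}\\ \abs{J}=s}}{0.3cm}\eps_J$$
(with the convention $c_{j,s}=0$ for $s<0$). Every $\phi_j$ enters linearly, so the lemma reduces to showing that the numerical coefficient $c_{j,n-dj}$ vanishes whenever $j\geq(n-r_u)/p^u$, the only exception being the boundary $d=p^e$, $j=n/p^e$, where $s=0$ forces $c_{j,0}=\eps_0^{j+1}=1$ and produces exactly the Kronecker term $\kronecker{d}{p^e}\phi_{n/p^e}$.

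I would then split on $u=\nu_p(n)\wedge e$. The case $u=e$ is short: $r_u=0$ and the bound reads $j<n/p^e$, while $dj\leq n$ forces $j\leq n/d$. When $d>p^e$ the factor $d/p^e$ is coprime to $p$ and $>1$, hence $\geq 2$, so $n/d\leq n/(2p^e)<n/p^e$ strictly. When $d=p^e$, the extremal $j=n/p^e$ yields $s=0$ and $c_{j,0}=1$, which is precisely the Kronecker term. In the case $u<e$, any $j\geq 1$ has $\nu_p(dj)\geq e>u=\nu_p(n)$, so $\nu_p(s)=u$ and $s=0$ is impossible; rewriting $j\geq(n-r_u)/p^u$ as $s\leq r_u-j(d-p^u)$ and using $d\geq p^{u+1}>p^u$ then forces $s<r_u$.

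The main combinatorial point, and what I expect to be the main obstacle, is then the following: if $u<e$, $\nu_p(s)=u$, and $1\leq s<r_u$, then no tuple $J\in\nN^{j+1}$ with $\abs{J}=s$ satisfies $\eps_J\neq 0$, so $c_{j,s}=0$. I will argue by contradiction on the strictly positive entries $i_1,\ldots,i_h$ of a hypothetical $J$ (zero entries contribute $\eps_0=1$ and are harmless). If some $\nu_p(i_\ell)\leq u$, then $\nu_p(i_\ell)<e$, so condition (ii) on $\eps$ combined with the monotonicity of $(r_w)_w$ gives $i_\ell\geq r_{\nu_p(i_\ell)}\geq r_u$, whence $s\geq i_\ell\geq r_u$, contradicting $s<r_u$. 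Otherwise every $\nu_p(i_\ell)>u$, which forces $\nu_p(s)=\nu_p\bigl(\textstyle\sum i_\ell\bigr)\geq u+1$, again contradicting $\nu_p(s)=u$. Combining this vanishing with the $u=e$ analysis gives the stated formula; the remaining bookkeeping ($\phi_0=1$ absorbed into the constant part of $P$, and $\phi_j=0$ for $j\notin\nN$ handling non-integer indices) is automatic.
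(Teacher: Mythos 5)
Your proof is correct and takes essentially the same route as the paper's: both expand $\lhs_n$ via Lemma \ref{lem:easy1}, use the dichotomy on the $p$-adic valuations of the entries of $J$ together with condition (ii) and the monotonicity of $(r_u)_u$ to bound which indices $j$ can carry a nonzero coefficient, and identify the $d=p^e$, $J=(0,\dots,0)$ term as the Kronecker contribution. The only differences are presentational: you argue contrapositively on the coefficients $c_{j,s}$ and make the strictness in the case $d>p^e$ explicit.
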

\begin{proof}
Consider the sum defining $\lhs_n$ in \eqref{eqn:Bcoeff}.
The indices $j$ and $J$ have to satisfy $dj + \abs{J} = n$, hence
$$
j \leq \frac{n - \abs{J}}{d}.
$$
Set $u=\nu_p(n)$, and suppose $u < e$.
Then $\nu_p(\abs{J})=\nu_p(n-dj)=u$.
Write $J=(J_0, \ldots, J_j)$.
Then from $\nu_p(\abs{J})=u$ we infer that there exists $h$ such that $\nu_p(J_h)\leq u$.
It follows that either $\eps_J=0$, or $\abs{J} \geq r_u$, and hence
$$
j \leq \frac{n - r_u}{d} < \frac{n-r_u}{p^u},
$$
since $d > p^u$.
Suppose now $u \geq e$. We have
$$
j \leq \frac{n}{d} \leq \frac{n-r_e}{p^e}.
$$
The last inequality is strict unless $d=p^e$.
In this case the only non-zero term with $j=n/p^e$ arises when $J=(0, \ldots, 0)$, and is given by $\eps_0^{n/p^e+1} \phi_{n/p^e}=\phi_{n/p^e}$. 
\end{proof}

We are now ready to prove Theorem \ref{thm:Bext}, by solving the equation $\lhs_n=\rhs_n$ for any $n$.
We recall that the unknowns are given by $\phi_j$ and $\wt{\eps}_i$.

We proceed by recursion on $j=\mc{J}(n)$, as follows.

\medskip

Lemma \ref{lem:BextRinvariant} provides the basis of the induction:
for any $n$ with $\mc{J}(n)=0$, we solve $\lhs_n=\rhs_n$ and infer $\wt{\eps}_n=\eps_n$.
\smallskip

Suppose now that $\phi_h$ and $\wt{\eps}_i$ are defined for any $h<j$ and $\mc{J}(i)<j$.
We shall solve the equation $\lhs_n=\rhs_n$ for all $n$ such that $\mc{J}(n)=j$, fixing the value of $\wt{\eps}_n$ for all such $n$ and of $\phi_j$.

We now claim that the equation $\lhs_n=\rhs_n$ can be written under the form
\begin{equation}\label{eqn:Bexteqnlot}
\wt{\eps}_n + \sumaa{\substack{0 \leq k \leq e \\ J_k(n)=j}}{0.3 cm}\Big(\frac{d + r_k}{p^k} \wt{\eps}_{r_k} \phi_j^{p^{m+k}}\Big)
- \kronecker{d}{p^e} \kronecker{n}{jd} \phi_j
= Q\Big(\phi_h, \wt{\eps}_i\ \big|\ h<j; i< N(j)\Big),
\end{equation}
where $Q$ is a suitable polynomial depending on $\phi_h$ and $\wt{\eps}_i$ as indicated.

Indeed, by Lemma \ref{lem:Bextrhslot} we have
$$
\rhs_n = \wt{\eps}_n + \sum_{k=0}^{e \wedge \nu_p(n)} \frac{d+r_k}{p^k} \wt{\eps}_{r_k} \phi_{\mc{J}_k(n)}^{p^{m+k}} +
\sum_{k=0}^{e \wedge \nu_p(n)} Q_k\Big(\phi_h, \wt{\eps}_i\ \big|\ h<\mc{J}_k(n); \nu_p(i) \wedge e = k, i<n\Big).
$$
Notice that if $\mc{J}_k(n) < \mc{J}(n) = j$, then $\frac{d+r_k}{p^k} \wt{\eps}_{r_k} \phi_{\mc{J}_k(n)}^{p^{m+k}}$ depends on $\phi_h$ with $h<j$.
We now show that if $i$ is such that $\nu_p(i) \wedge e = k$ and $i < n$, then $\mc{J}(i) < \mc{J}(n)$.
Notice that $\nu_p(i) \wedge e \leq \nu_p(n) \wedge e$. It follows that
$$
\mc{J}(i)\leq \max_{k \leq \nu_p(n) \wedge e} \frac{i-r_k}{p^k} < \max_{k \leq \nu_p(n) \wedge e} \frac{n-r_k}{p^k} = \mc{J}(n).
$$

By Lemma \ref{lem:Bextlhslot}
$$
\lhs_n =\kronecker{d}{p^e} \phi_{n/p^e} + P\big(\phi_h\ |\ h < \mc{J}_{u \wedge e}(n) \leq \mc{J}(n)=j\big);
$$
hence \eqref{eqn:Bexteqnlot} holds.

We now come back to the resolution of $\lhs_n = \rhs_n$ for any $n$ with $\mc{J}(n)=j$.
The value of $Q$ in \eqref{eqn:Bexteqnlot} is determined by induction hypothesis (for any such $n$).

For $n = N(j)$ we solve the equation $\lhs_{N(j)}=\rhs_{N(j)}$ as follows.
Set $\wt{\eps}_{N(j)}=0$.
The left hand side of \eqref{eqn:Bexteqnlot} is a polynomial $R(\phi_j)$, of the form
$$
R(z)=\sumaa{\substack{0 \leq k \leq e \\ J_k(n)=j}}{0.3 cm} R_k z^{p^{m+k}} - \kronecker{d}{p^e} \kronecker{n}{jd} z,
$$
with $R_k \neq 0$ for any $k$.
We need to check that if $\{k\ |\ J_k(n)=j\}=\{0\}$, then $\kronecker{d}{p^e} \kronecker{n}{jd}=0$.
Suppose by contradiction that $d=p^e$ and $n=jp^e$. Then $J_e(n)=j$.
If $e > 0$ we have a contradiction.
If $e=0$, then $d=p^0=1$, in contradiction with the fact that $f$ is superattracting and $dp^m \geq 2$.
Hence the polynomial $R$ is non-null, and we can find $\phi_j \in \nK$ that solves the equation $\lhs_{N(j)}=\rhs_{N(j)}$.

For all $n \neq N(j)$ with $\mc{J}(n)=j$, from equation \eqref{eqn:Bexteqnlot} we infer that there exists a (unique) $\wt{\eps}_n$ solving the equation.

With this procedure, by Lemma \ref{lem:howJis}, we get that $\wt{\eps}_n=0$ for:
\begin{itemize}
\item $n = N(j)$ for all $0 < j < r_0/(p-1)$;
\item $n < r_u$, $\nu_p(n)=u$, $0 \leq u < e$;
\item $n \geq p r_0/(p-1)$.
\end{itemize}
Hence the map $\wt{f}(x)=y^d \wt{\eps}(y)$ provides a normal form \eqref{eqn:Bextnormalform} satisfying all conditions (i--iv), and we are done.

Notice that the values of $\wt{\eps}_n$ for any $n$ depend on the value of $\phi_j$ only for $j < r_0/(p-1)$.
These coefficients are uniquely determined up to a finite number of choices.
The last assertion of the statement follows.

\begin{rmk}\label{rmk:Bextgeneralization}
Theorem \ref{thm:Bext}, or similar results, hold over fields $\nK$ with milder hypotheses than being algebraically closed.

Let now $\nK$ be any field of characteristic $p>0$.
It can be shown that if $f:(\nK,0) \to (\nK,0)$ is a superattracting germ with $f' \not \equiv 0$ (or analogously $m(f)=0$), then $f$ admits a polynomial normal form of degree $\leq d+ r_0 p/(p-1)$.
More generally, if $m(f)>0$, and $\nK$ is closed by taking $p^m$-th roots, $f$ admits again a polynomial normal form of degree $\leq p^m(d+r_0p/(p-1))$.

Indeed, by Lemma \ref{lem:howJis}.(iii) when $n$ is \emph{strictly} bigger than $r_0p/(p-1)$ we have that $\mc{J}_0(n) > \mc{J}_k(n)$ for any $k > 0$.
Hence by \eqref{eqn:Bexteqnlot} the equation $\lhs_n=\rhs_n$ is of the form
$$
\wt{\eps}_n + r_0 \wt{\eps}_{r_0} \phi_{n-r_0}^{p^{m}} = Q\Big(\phi_h, \wt{\eps}_i\ \big|\ h<n-r_0; i< n\Big).
$$
This equation can be solved setting $\wt{\eps}_n=0$ as far as we can take the $p^m$-th root of elements in $\nK$.
\end{rmk}


\section{Formal and analytic classifications coincide}

This section is devoted to proving Theorem \ref{thm:Ecalle}.

Let $f:(\nK,0)\to(\nK,0)$ be a superattracting germ, and set $m=m(f)$, $d=d(f)$, $r_0=r_0(f)$ given by Definitions \ref{def:m}, \ref{def:d}, \ref{def:r}.

Suppose $\wt{f}:(\nK,0)\to(\nK,0)$ is a superattracting germ formally conjugate to $f$, and let $\Phi(x)=x\sum_n \phi_n x^n$ be the formal conjugacy between $f$ and $\wt{f}$.
We may assume $\phi_0=1$.

We want to show that $\Phi$ converges.
When $\nu_p(d)=0$, the assertion follows by Theorem \ref{thm:Beasy}.

Suppose $\nu_p(d)>0$, or equivalently $r_0 > 0$.
By Theorem \ref{thm:Bext}, we can suppose that $f$ is of the form
$$
f(x)=g(y)=C y^d \big(1+\eps(y)\big),
$$
where $\eps:(\nK,0) \to (\nK,0)$ is a convergent power series with $\eps(0) = 0$, and $\wt{f}$ is given by the truncation of $f$ at a suitable order $< p^m\big(d + \big\lfloor \frac{r_0p}{p-1}\big\rfloor + 1\big)$.

Write $C\big(1+\eps(y)\big)=\sum_n \eps_n y^n$.
Up to linear conjugacy, we can suppose $\abs{\eps_n} \leq 1$ for any $n$.
Recall that by definition, $\eps_{r_0} \neq 0$.

By the proof of Theorem \ref{thm:Bext} (see also Lemma \ref{lem:howJis}), the conjugacy relation $\Phi \circ f = \wt{f} \circ \Phi$ implies 
$$
r_0 \eps_{r_0} \phi_n^{p^m} = \lhs_{n+r_0} - \rhs^*_{n+r_0},
$$
for any $n$ big enough ($n > \left\lfloor \frac{p r_0}{p-1} \right\rfloor -r_0$).
Here $\rhs^*_{n+r_0}$ is defined as
\begin{equation}\label{eqn:Erhsstar}
\rhs^*_{n+r_0} = \sumaa{\substack{i \geq 0, I \in \nN^{d+i}\\ i + \abs{I}=n+r_0 \\ (i, I) \not \in \mc{I}}}{0.5cm} \wt{\eps}_i \phi_I^{p^m},
\end{equation}
where $\mc{I}=\{(r_0, I)\ |\ I=(0, \ldots, 0, n, 0, \ldots, 0)\}$.
Notice that for such big $n$, we have that $\lhs_{n+r_0}$ and $\rhs^*_{n+r_0}$ depend on $\phi_j$ only for $j < n$.

We now show that the sequence $\phi_n$ grows at most exponentially fast, which implies the result.
Set
$$
s_0 := \left\lfloor \frac{p r_0}{p-1} \right\rfloor +1-r_0 \geq 1, \qquad s_{h+1} := p s_h - r_0.
$$

Notice that the value of $s_0$ we picked has the property that if $n=pl-r_0 \geq s_0$, then $n > l \geq s_0$.

By solving the recurrence, we get
$$
s_h =s_0 p^h - r_0 \frac{p^h-1}{p-1}.
$$
The difference $k_h:=s_{h+1}- s_h$ is given by
$$
k_h= p^h (s_0(p-1) - r_0).
$$
Notice that $s_0 (p-1) > r_0 p -r_0 (p-1) = r_0$, hence $k_h \geq p^h \geq 1$ for any $h$.
Let $\gamma > 1$ be such that $\abs{\phi_n} \leq \gamma^n$ for $n \leq s_0$.

Set $\eta:= - p^{-m} \log_\gamma \abs{\eps_{r_0}}\geq 0$, and
$$
t_{h+1}:= p t_h + \eta, \qquad t_0:=s_0.
$$
We shall show that
$$
\abs{\phi_{n}} \leq \gamma^{s_0 + c (n-s_0)} \mbox{ for any } n \geq s_0, \qquad c:= \frac{s_0(p-1) + \eta}{s_0(p-1)-r_0}>1.  
$$
Up to increasing $\gamma$, we assume $\eta < c-1$.

More precisely, we show the following estimates.
\begin{prop}\label{prop:Eest}
Let $n \geq s_0$, and $h \in \nN, 0 \leq k < k_h$ be such that $n=s_h + k$.
Set
$$
\delta_h := \frac{c-1-\eta}{k_h}, \qquad c_n:=c_{h,k}:=t_h + k(c - \delta_h).
$$
Then for any $n \geq s_0$ we have
\begin{align}
\abs{\lhs_{n+r_0}} &\leq \gamma^{c_{n-1}}, \label{eqn:Eestlhs}\\ 
\abs{\rhs^*_{n+r_0}} &\leq \gamma^{p^m (c_n-\eta)}, \label{eqn:Eestrhs}\\
\abs{\phi_n} &\leq \gamma^{c_n}. \label{eqn:Eestphi}
\end{align}
\end{prop}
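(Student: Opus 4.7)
The plan is to prove the three bounds jointly by strong induction on $n \geq s_0$, extending $c_n := n$ for $n < s_0$ so that the initial hypothesis $\abs{\phi_n} \leq \gamma^n$ for $n \leq s_0$ reads uniformly as $\abs{\phi_n} \leq \gamma^{c_n}$. The base case $n = s_0$ gives \eqref{eqn:Eestphi} directly, since $c_{s_0} = t_0 = s_0$, while \eqref{eqn:Eestlhs} and \eqref{eqn:Eestrhs} at $n = s_0$ follow from the expansions \eqref{eqn:Bcoeff} and \eqref{eqn:Erhsstar}, which at that stage involve only $\phi_j$ with $j < s_0$.

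For the inductive step I first bound $\lhs_{n+r_0}$: since $\abs{\eps_J} \leq 1$, the non-archimedean norm reduces the estimate to $\max \abs{\phi_j}$ over $0 \leq j \leq (n+r_0)/d \leq n - 1$ (using $d \geq 2$), and the inductive hypothesis together with monotonicity of $c$ gives \eqref{eqn:Eestlhs}. For $\rhs^*_{n+r_0}$, the non-archimedean inequality and $\abs{\wt{\eps}_i} \leq 1$ reduce the bound to controlling $p^m \sum_j c_{I_j}$ over admissible $(i, I) \notin \mc{I}$. Finally, combining the two estimates in the equation $r_0 \eps_{r_0} \phi_n^{p^m} = \lhs_{n+r_0} - \rhs^*_{n+r_0}$ and using $\abs{\eps_{r_0}} = \gamma^{-p^m \eta}$ yields $\abs{\phi_n}^{p^m} \leq \max(\gamma^{c_{n-1} + p^m \eta}, \gamma^{p^m c_n})$. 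The desired \eqref{eqn:Eestphi} then follows from the inequality $c_n - c_{n-1}/p^m \geq \eta$, which I would verify by explicit computation: in the interior of a block $[s_h, s_{h+1})$ one has $c_n - c_{n-1} = c - \delta_h$, which exceeds $1 + \eta$ because $\eta < c - 1$; at a boundary $n = s_{h+1}$ the same recurrence, together with the identity $(p-1) t_h = c k_h - \eta$ obtained from $t_{h+1} = p t_h + \eta$, gives an analogous gap.

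The main obstacle is the combinatorial estimate $\sum_j c_{I_j} \leq c_n - \eta$ for the $\rhs^*$ bound. The structural fact I would rely on is the super-additivity $c_{a+b} \geq c_a + c_b$, itself a consequence of the inequality $c_x \leq c \cdot x - C_0$ for $x \geq s_0$ (where $C_0 := (c r_0 + \eta)/(p-1)$, whose constancy in $h$ follows from the specific value of $c$) together with $c_x = x$ otherwise. For admissible tuples with $i \geq r_0$ we have $\sum_j I_j = n + r_0 - i \leq n$, and the exclusion of $\mc{I}$ forces $\max_j I_j \leq n - 1$, so grouping entries by super-additivity bounds $\sum_j c_{I_j}$ by $c_{n-1} + c_1$, which is at most $c_n - \eta$ by the gap just discussed. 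For $i = 0$ the multinomial coefficients in $(T^m \phi)^d$ annihilate modulo $p$ (since $p \mid d$) unless $p^e \mid n + r_0$; in the surviving case the identity $(T^m \phi)^d = (T^{m+e} \phi)^{d/p^e}$ rescales the problem to indices $H_j \leq (n+r_0)/p^e$, and super-additivity combined with the algebraic identity $(p^e - 1) C_0 \geq c r_0 + \eta$ yields the required bound $p^{m+e} \sum_j c_{H_j} \leq p^m (c_n - \eta)$. Carrying out this case analysis with the precise value $c = (s_0 (p-1) + \eta)/(s_0(p-1) - r_0)$ and the recursive definitions of $s_h, t_h, k_h, \delta_h$ is the delicate combinatorial heart of the proof, and is the step where the specific shape of the sequences $(s_h)$ and $(t_h)$ is forced.
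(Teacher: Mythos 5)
Your overall skeleton (joint induction on $n$, extending $c_n=n$ for $n\le s_0$, bounding $\lhs_{n+r_0}$ by $\gamma^{c_{n-1}}$, then extracting $\phi_n^{p^m}$ from $r_0\eps_{r_0}\phi_n^{p^m}=\lhs_{n+r_0}-\rhs^*_{n+r_0}$) matches the paper's proof, and your identity $(c-1)s_0=(cr_0+\eta)/(p-1)$ is correct. The gap is exactly at the step you call the combinatorial heart: the reduction of $\sum_j c_{I_j}$ to the extremal pair $c_{n-1}+c_1$. The inequality ``$c_a+c_b\le c_{n-1}+c_1$ for $a+b\le n$, $a,b\ge 1$'' is \emph{false}, because the sequence $(c_n)$ is not convex: within a block the increment is $c-\delta_h$, but the increment across the boundary $s_{h+1}$ is $(c-\delta_h)+(c-1-\eta)$, after which it drops back to $c-\delta_{h+1}$. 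Concretely, take $p=3$, $r_0=1$ (so $s_0=1$, $c=2+\eta$, $s_2=5$, $k_2=9$) and $\eta=0$: then $c_5=9$, $c_9=149/9\approx 16.56$, $c_{10}=166/9\approx 18.44$, and for $n=10$, $a=b=5$ one gets $c_5+c_5=18>c_9+c_1\approx 17.56$, while such a configuration genuinely occurs in \eqref{eqn:Erhsstar} (e.g.\ $i=r_0$, $I$ with two entries equal to $5$, which is not in $\mc{I}$). So your chain $\sum_j c_{I_j}\le c_{n-1}+c_1\le c_n-\eta$ breaks at the first inequality, even though the target $\sum_j c_{I_j}\le c_n-\eta$ is still true (here $18\le 18.44-\eta$). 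Plain superadditivity plus the gap $c_n-c_{n-1}\ge 1+\eta$ does not suffice; what is needed is the quantified superadditivity the paper isolates as Lemma \ref{lem:cnsupadd}(b), namely $c_{n'}+c_{n''}+\delta_h\le c_n-\eta$ whenever $n=n'+n''>s_0$ with $n'n''\neq 0$, proved from the two-sided linear bounds $s_0+c(n-s_0)-(k_h-1)\delta_h\le c_n\le s_0+c(n-s_0)$ of Lemma \ref{lem:comparecnlinear}. That extra slack of $\eta+\delta_h$ per split is precisely what your extremal-pair argument was meant to replace, and it cannot be recovered from the upper bound $c_x\le cx-C_0$ alone (superadditivity itself already requires the lower bound as well).

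A secondary incompleteness: your case analysis for $\rhs^*$ treats only $i\ge r_0$ and $i=0$, but the normal form generally has nonzero coefficients $\wt{\eps}_i$ for $0<i<r_0$ with $\nu_p(i)\ge 1$ (e.g.\ $i=r_u$, $1\le u<e$). The paper handles all $0\le i<r_0$ uniformly: since $\nu_p(d+i)\ge 1$, the corresponding terms vanish by Lemma \ref{lem:easy2} unless $n+r_0=pl$, and in that case one rescales only by $p$ and uses Lemma \ref{lem:cnsupadd}(c), $pc_l\le c_n-\eta$ for $l\ge s_0$ (the choice of $s_0$ guaranteeing $n>l\ge s_0$). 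Your variant with $p^e$ and the inequality $(p^e-1)C_0\ge cr_0+\eta$ could probably be made to work for $i=0$, but as written the intermediate values of $i$ are not covered. The first issue, however, is the one that genuinely invalidates the proposed proof and forces you back to an analogue of Lemma \ref{lem:cnsupadd}(b).
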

\begin{proof}
We prove these estimates by induction on $n$.

Set $c_n=n$ when $n \leq s_0$.
Notice that the sequence $c_n$ is (strictly) increasing.
Possibly increasing $\gamma$, we can suppose that the estimates hold for $n \leq s_0$, and get the basis of the induction.

Suppose the estimates \eqref{eqn:Eestlhs}, \eqref{eqn:Eestrhs} and \eqref{eqn:Eestphi} hold for any $n'<n$, we want to prove them for $n$.

Since $n\geq s_0$, we have that $\lhs_{n+r_0}$ depends only on $\phi_j$, $j<n$.
It follows that $\abs{\lhs_{n+r_0}} \leq \gamma^{c_{n-1}}$, and \eqref{eqn:Eestlhs} holds.

To obtain the remaining two estimates, we need the following properties for the sequence $(c_n)_n$.

\begin{lem}\label{lem:comparecnlinear}
For any $n=s_h+k \geq s_0$ with $0 \leq k \leq k_h-1$, we have
\begin{equation}\label{eqn:cnlinear}
c_n= s_0 + c(n-s_0) - k \delta_h,
\end{equation}
\end{lem}
\begin{proof}
Let us prove \eqref{eqn:cnlinear} when $k=0$, by induction on $h$.
For $h=0$ the equality trivially holds.
Suppose the equality holds for $h$, and let us prove it for $h+1$.
It suffices to show that $t_{h+1}-t_h=c_{s_{h+1}}-c_{s_h}=c k_h$.
But $t_{h+1}-t_h=p^h(s_0(p-1)+\eta) = p^h c (s_0(p-1)-r_0) = c k_h$.
Suppose now $n=s_h+k$ with $0 \leq k \leq k_h-1$.
Then
$$
c_n=t_h + k(c-\delta_h) = s_0 + c(s_h-s_0) + c k - k \delta_h = s_0 + c(n-s_0) -k \delta_h.
$$ 
\end{proof}
\begin{lem}\label{lem:cnsupadd}
The sequence $(c_n)$ satisfies the following properties.
\begin{enumerate}[(a)]
\item If $n = n'+n'' \leq s_0$, then $c_{n'}+c_{n''} = c_n$.
\item Suppose $n = n'+n'' > s_0$, $n'n'' \neq 0$, and set $n=s_h + k$ such that $0 \leq k < k_h$. Then $c_{n'} + c_{n''} + \delta_h \leq c_n -\eta$.
\item Suppose $n=pl-r$ with $l \geq s_0$. Then $pc_l \leq c_n-\eta$.
\end{enumerate}
\end{lem}
\begin{proof}
The property (a) trivially holds.
From Lemma \ref{lem:comparecnlinear}, for any $n=s_h+k \geq s_0$ with $0 \leq k \leq k_h-1$ we infer
\begin{equation}\label{eqn:lemcnsupaddest1}
s_0 + c(n-s_0) - (k_h-1) \delta_h \leq c_n \leq s_0 + c(n-s_0).
\end{equation}
If $1 \leq n \leq s_0$, then we get
\begin{equation}\label{eqn:lemcnsupaddest2}
c_n = n \leq c(n-1)+1 = s_0 + c(n-s_0) + (c-1)(s_0-1).
\end{equation}
Suppose we are in case (b), and let $h,k$ be such that $n=s_h+k$, $0 \leq k \leq k_h-1$.

First, suppose $n', n'' \leq s_0$.
Then we have
\begin{align*}
c_{n'}+c_{n''} + \delta_h +\eta =& n + \delta_h +\eta = s_0 + c(n-s_0) - (k_h-1) \delta_h + (c-1)(s_0-n) + k_h \delta_h + \eta \\
&\leq c_n + (c-1) (s_0-n) + k_h \delta_h + \eta \leq c_n,
\end{align*}
where the first inequality is given by \eqref{eqn:lemcnsupaddest1}, and the last inequality holds since $s_0-n \leq -1$ and $k_h \delta_h=c-1-\eta$ by definition.

Now suppose that $n'n''\neq 0$ and either $n'$ or $n''$ is $> s_0$.
We have
\begin{align*}
c_{n'} + c_{n''} + \delta_h + \eta \leq 2s_0 &+ c(n'+n''-2s_0) + (c-1)(s_0-1) + \delta_h + \eta\\
&= (1-c) + s_0 + c (n-s_0) + \delta_h +\eta \leq c_{n} + (1+\eta -c) + k_h \delta_h = c_n,
\end{align*}
where the first inequality is given by \eqref{eqn:lemcnsupaddest1} and \eqref{eqn:lemcnsupaddest2}, and the last inequality again by \eqref{eqn:lemcnsupaddest1}.

We now prove (c).
Write $l=s_h + k$, with $0 \leq k \leq k_h-1$.
Then $n=pl-r_0= ps_h +pk - r_0 = s_{h+1}+pk$. Notice that $0 \leq pk < k_{h+1}$.
Hence
$$
pc_l = p(t_h + k(c-\delta_h)) = \underbrace{pt_h + \eta}_{=t_{h+1}} + pk(c-\delta_{h+1}) - \eta +pk(\delta_{h+1}-\delta_h) = c_n - \eta - pk (\delta_h - \delta_{h+1}) \leq c_n - \eta.
$$
\end{proof}

We come back to the proof of Proposition \ref{prop:Eest}.
Consider the sum in \eqref{eqn:Erhsstar}.
If $i \geq r_0$, then for any dummy variable $I$ in the sum we have $\abs{I}=n+r_0-i \leq n$.
Recalling that if $i=r_0$ then $I \neq (0, \ldots, 0,n,0, \ldots, 0)$, by Lemma \ref{lem:cnsupadd}.(a,b) and the induction hypothesis we infer $\abs{\phi_I} \leq \gamma^{c_n-\eta}$.

Suppose $0 \leq i < r_0$ be such that $\eps_i \neq 0$.
By definition of $r_0$, we infer $\nu_p(i) \geq 1$.
If $\nu_p(n+r_0) = 0$, by Lemma \ref{lem:easy2} we have
$$
\sumaa{\substack{J \in \nN^{d+i}\\i+\abs{J}=n+r_0}}{0.3cm} \phi_J^{p^m} = 0,
$$
and we get \eqref{eqn:Eestrhs} in this case.

Suppose now $n+r_0=pl$ with $l \in \nN^*$.
Thanks to our choice of $s_0$, we have that $n \geq s_0$ implies $n > l \geq s_0$.
Suppose we have $I \in \nN^{(d+i)/p}$ with $\abs{I}=l$.
By Lemma \ref{lem:cnsupadd}.(c) and the induction hypothesis, we get $\abs{\phi_I^p} \leq \gamma^{pc_l} \leq \gamma^{c_n-\eta}$.
The estimate \eqref{eqn:Eestrhs} easily follows.

We now prove \eqref{eqn:Eestphi}. Since $n \geq s_0$, $\phi_n$ satisfies
$$
\phi_n^{p^m} = \frac{1}{r_0 \eps_{r_0}}\big(\lhs_{n+r_0}- \rhs^*_{n+r_0}\big).
$$
By \eqref{eqn:Eestlhs} we have $\abs{\lhs_{n+r_0}} \leq \gamma^{c_{n-1}}$.
Notice that $c_{n-1} \leq c_n - \eta$ if we pick $\eta$ small enough (i.e., $\gamma$ big enough).
Hence
$$
\abs{\phi_n}^{p^m} \leq \gamma^{p^m\eta} \left(\abs{\lhs_{n+r_0}} + \abs{\rhs^*_{n+r_0}}\right) \leq \gamma^{p^m (c_n - \eta + \eta)} = \gamma^{p^m c_n},
$$
from which \eqref{eqn:Eestphi} follows.
\end{proof}

By \eqref{eqn:Eestphi} it follows that there exists $\gamma \gg 1$ such that 
$$
\abs{\phi_n} \leq \gamma^{c_n} \leq \gamma^{s_0(1-c)} (\gamma^c)^n,
$$
where the last equality follows from Lemma \ref{lem:comparecnlinear}.
Hence the power series $\phi(x)$ converges when $\abs{x} < \gamma^{-c}$.

\begin{rmk}\label{rmk:Ecallegeneralization}
Theorem \ref{thm:Ecalle} hold over fields $\nK$ with milder hypotheses than being algebraically closed.
In fact, it holds over a field $\nK$ as far as the superattracting germs admit polynomial normal forms (of degree $< p^m\big(d + \big\lfloor \frac{r_0p}{p-1}\big\rfloor + 1\big)$).
It hold for example for any field $\nK$ closed under taking $p^m$-th roots (see Remark \ref{rmk:Bextgeneralization}).
\end{rmk}


\section{Superattracting germs in higher dimensions}

In this section we prove Theorem \ref{thm:monomial}, and give a few remarks on the local dynamics of superattracting germs in higher dimensions.
The notations used for the combinatorics in higher dimensions are the same explained in detail in the introduction and in the first section.

\begin{proof}[Proof of Theorem \ref{thm:monomial}]
Let $\ve{f}:(\nK^N, 0) \rightarrow (\nK^N,0)$ be a superattracting germ of the form
\begin{equation*}\tag{\ref*{eqn:Mstart}}
\ve{f}(\vxx)=\ve{C} \vxx^\mi{D} \big(\one + \eps(\vxx)\big).
\end{equation*}
Let $\ve{\wt{f}}:(\nK^N, 0) \rightarrow (\nK^N,0)$ be our candidate normal form
\begin{equation*}\tag{\ref*{eqn:Mnormalform}}
\ve{\wt{f}}(\vxx)=\ve{C} \vxx^\mi{D},
\end{equation*}
Let us consider a formal automorphism of the form
\begin{equation}\label{eqn:Mchangeofcoord}
\ve{\Phi}(\vxx)=\vxx \ve{\phi}(\vxx) = \big(x^1 \phi^1(\vxx), \ldots, x^N \phi^N(\vxx)\big),
\end{equation}
where $\vxx=(\xx^1, \ldots, \xx^n)$ and $\phi^j(\ve{0})=1$ for any $j=1, \ldots, N$.
We want to find a $\ve{\Phi}$ satisfying the conjugacy relation $\ve{\Phi} \circ \ve{f}= \ve{\wt{f}} \circ \ve{\Phi}$.
The conjugacy relation is equivalent to
$$
\big(\one + \ve{\eps}(\vxx)\big)\big(\ve{\phi} \circ \ve{f}(\vxx)\big) = \big(\ve{\phi}(\vxx)\big)^{\mi{D}}.
$$
For any $n \in \nN$, set
$$
\ve{\phi}_n(\vxx) = \prod_{k=1}^{n} \big(\one + \ve{\eps}\circ \ve{f}^{\comp k-1}(\vxx)\big)^{\mi{D}^{-k}}.
$$
Notice that $\ve{\phi}_n$ is well defined for any $n \in \nN$.
Indeed, since $\det \mi{D}$ and $p$ are coprime, for any $k$ we have that the entries of $\mi{D}^{-k}$ are of the form $a/b$, with $a,b \in \nZ$ and $\nu_p(a) \geq \nu_p(b)$.
By Remark \ref{rmk:rootwelldefined}, $\big(\one + \ve{\eps}\circ \ve{f}^{\comp k-1}(\vxx)\big)^{\mi{D}^{-k}}$ is a vector of well defined analytic germs for any $k$.

We want now to show that the sequence $\ve{\phi}_n$ converges to a suitable analytic germ $\ve{\phi}_\infty$, that will define the conjugacy we are looking for.
By Proposition \ref{prop:convergenceiseasy}, it suffices to show that $\norm{\ve{\eps} \circ \ve{f}^{\comp n}(\vxx)}$ tends to $0$.

Since $\ve{\eps}$ is analytic with $\ve{\eps}(\ve{0})=\ve{0}$, there exists $M \gg 0$ such that $\norm{\ve{\eps}(\vxx)}\leq M \norm{\vxx}$ for $\norm{\vxx} \ll 1$.
Since $\ve{f}$ is contracting, there exists $0<\Lambda < 1$ such that $\norm{\ve{f}^{\comp n}(\vxx)} \leq \Lambda^n \norm{\vxx}$ for $\norm{\vxx} \ll 1$ and any $k$.
It follows that
$$
\norm{\ve{\eps} \circ \ve{f}^{\comp n}(\vxx)} \leq M \Lambda^n \norm{\vxx} \to 0
$$
for $n \to \infty$ and $\norm{\vxx}$ small enough.
\end{proof}

\begin{rmk}\label{rmk:MwhataboutC}
In the previous theorem, the vector $C$ is invariant by change of coordinates of the form \eqref{eqn:Mchangeofcoord}.
This is clearly not the case for linear change of coordinates.
Since we want the change of coordinates to preserve the monomial normal form \eqref{eqn:Mnormalform}, we reduce ourselves to consider change of coordinates of the form $\vxx \mapsto \Delta \vxx$, where $\Delta$ is a diagonal matrix.
It is then easy to show that if $1$ is not an eigenvalue for $\mi{D}$, then there exists $\Delta$ such that the associated linear map conjugates $f$ to a germ of the form \eqref{eqn:Mnormalform} with $C=\one$.
This is not the case in general if $1$ is an eigenvalue for $\mi{D}$.
Indeed it can be easily shown that the moduli space up to conjugacy of such germs has dimension equal to the rank of $\mi{D}-\mi{\on{Id}}$.
\end{rmk}

\begin{rmk}\label{rmk:problemshighdim}
Using similar techniques, it is possible to extend some of the results in \cite{ruggiero:rigidgerms} over (normed) fields $\nK$ of characteristic $p>0$.
In particular, Theorem 2.7 in op.cit. still holds if we replace $\nC$ by any such $\nK$.
Moreover, Theorem 3.7 in op.cit. holds again if we replace $\nC$ by $\nK$, as far as $\det \mi{D}$ is coprime to $p$.

When $p$ divides $\det{\mi{D}}$, Theorem \ref{thm:monomial}, and Theorem 3.7 in op.cit., do not hold in general.
Indeed, the proof of these theorems uses the fact that the matrix $\mi{D}$ is invertible in $\nK$.
The study of this problem when $p$ divides $\det{\mi{D}}$ is much more complicated in higher dimensions, since we lose the natural total order on the coefficients of the map $\ve{f}$ developed in formal power series.

Notice that Theorem 2.7 in op.cit. gives the classification of contracting automorphisms when $r=p=0$, $s=d$ (with respect to the notations of \cite[Theorem 2.7]{ruggiero:rigidgerms}). Such a classification is well known under the name of \emph{Poincar\'e-Dulac} normal forms (see \cite{herman-yoccoz:smalldivisornonarchi}, and \cite{sternberg:localcontractionstheorempoincare,rosay-rudin:holomorphicmaps,berteloot:methodeschangementechelles} for the analogous problem in the complex setting).
This is in sharp contrast with the Poincar\'e-Dulac theorem for vector fields, where the study of resonances is much more intricate.
See \cite[Chapter 1]{ilyashenko-yakovenko:analDE} for an extensive presentation of Poincar\'e-Dulac normal forms and resonances for vector fields in the complex setting, and \cite[Part I]{herman-yoccoz:smalldivisornonarchi} for some results and remarks in the non-archimedean setting.
\end{rmk}

The results in \cite{ruggiero:rigidgerms} cited in Remark \ref{rmk:problemshighdim} are partial extensions in higher dimensions of the classification of contracting rigid germs given by Favre \cite{favre:rigidgerms} in dimension $2$. 

A \emph{rigid germ} is an analytic germ  $f:(\nC^2,0) \to (\nC^2,0)$ whose generalized critical set $C(f^\infty):=\bigcup_{n=1}^{\infty} C(f^n)$ has simple normal crossings and is forward $f$-invariant.

Favre and Jonsson (\cite{favre-jonsson:eigenval}) showed that any superattracting germ $f:(\nC^2,0) \to (\nC^2,0)$ is \emph{birationally} conjugate to a rigid germ (the condition of $f$ being superattracting is not necessary, see \cite{ruggiero:rigidification}).
Moreover, rigid germs and their normal forms have many applications for the study of a special class of non-K\"ahler compact complex surfaces, called Kato surfaces (see for example \cite{dloussky:phdthesis,nakamura:surfacesclass70curves,dloussky-oeljeklaus:vectorfieldsfoliationsoncptsurfacesclass70,dloussky-oeljeklaus-toma:surfacesclasse70champvecteurs2,toma:kahlerrankcpctcplxsurf}), and the study of the basin for attraction at infinity of suitable polynomial automorphisms, called H\'enon maps (see \cite{hubbard-oberstevorth:henonmappings1}, and \cite{friedland-milnor:dynpolyauto} for a precise description of the group of polynomial automorphisms in $\nC^2$).

Favre's classification provides \emph{polynomial} normal forms for contracting rigid germs.
Moreover, formal and analytic classifications coincide for \emph{superattracting} rigid germs.

These properties remain valid over any field $\nK$ of characteristic zero.
When $\nK$ has characteristic $p>0$, Favre's classification is still valid whenever $p$ and $\det\mi{D}$ are coprime, where $\mi{D}$ represents the action induced by $f$ on the fundamental group $\pi_1(\Delta^2 \setminus C(f^\infty))$, where $\Delta^2$ is a small polydisc centered at the origin (see \cite{favre:rigidgerms} for details).
The case when $p$ divides $\det{\mi{D}}$ still needs to be understood.
It is natural then to formulate the following 	questions.

\begin{quest}
Do there exist polynomial normal forms for contracting rigid germs $f:(\nK^2,0) \to (\nK^2,0)$, where $\nK$ is a (algebraically closed) field of positive characteristic?
\end{quest}

\begin{quest}
Do the formal and analytic classifications of superattracting rigid germs $f:(\nK^2,0) \to (\nK^2,0)$ coincide, when $\nK$ is a complete normed (algebraically closed) field of positive characteristic?
\end{quest}

\bibliographystyle{alpha}
\bibliography{biblio}

\def\cprime{$'$}
\begin{thebibliography}{HOV94}

\bibitem[Ber06]{berteloot:methodeschangementechelles}
Fran{\c{c}}ois Berteloot.
\newblock M\'ethodes de changement d'\'echelles en analyse complexe.
\newblock {\em Ann. Fac. Sci. Toulouse Math. (6)}, 15(3):427--483, 2006.

\bibitem[B{\"o}t04]{bottcher:1904}
L.~E. B{\"o}ttcher.
\newblock The principal laws of convergence of iterates and theri application
  to analysis (russian).
\newblock {\em Izv. Kazan.Fiz.-Mat. Obshch.}, 14:155--234, 1904.

\bibitem[Dlo84]{dloussky:phdthesis}
Georges Dloussky.
\newblock Structure des surfaces de {K}ato.
\newblock {\em M\'em. Soc. Math. France (N.S.)}, (14):ii+120, 1984.

\bibitem[DO99]{dloussky-oeljeklaus:vectorfieldsfoliationsoncptsurfacesclass70}
G.~Dloussky and K.~Oeljeklaus.
\newblock Vector fields and foliations on compact surfaces of class {$\rm
  VII_0$}.
\newblock {\em Ann. Inst. Fourier (Grenoble)}, 49(5):1503--1545, 1999.

\bibitem[DOT01]{dloussky-oeljeklaus-toma:surfacesclasse70champvecteurs2}
Georges Dloussky, Karl Oeljeklaus, and Matei Toma.
\newblock Surfaces de la classe {VII{$_0$}} admettant un champ de vecteurs.
  {II}.
\newblock {\em Comment. Math. Helv.}, 76(4):640--664, 2001.

\bibitem[Fav00]{favre:rigidgerms}
Charles Favre.
\newblock Classification of 2-dimensional contracting rigid germs and {K}ato
  surfaces. {I}.
\newblock {\em J. Math. Pures Appl. (9)}, 79(5):475--514, 2000.

\bibitem[FJ07]{favre-jonsson:eigenval}
Charles Favre and Mattias Jonsson.
\newblock Eigenvaluations.
\newblock {\em Ann. Sci. \'Ecole Norm. Sup. (4)}, 40(2):309--349, 2007.

\bibitem[FM89]{friedland-milnor:dynpolyauto}
Shmuel Friedland and John Milnor.
\newblock Dynamical properties of plane polynomial automorphisms.
\newblock {\em Ergodic Theory Dynam. Systems}, 9(1):67--99, 1989.

\bibitem[GS11]{gardnerspencer:phdthesis}
Matthew Gardner~Spencer.
\newblock {\em Moduli spaces of power series in finite characteristic}.
\newblock PhD thesis, Brown University, Providence, 2011.

\bibitem[HOV94]{hubbard-oberstevorth:henonmappings1}
John~H. Hubbard and Ralph~W. Oberste-Vorth.
\newblock H\'enon mappings in the complex domain. {I}. {T}he global topology of
  dynamical space.
\newblock {\em Inst. Hautes \'Etudes Sci. Publ. Math.}, (79):5--46, 1994.

\bibitem[HY83]{herman-yoccoz:smalldivisornonarchi}
M.~Herman and J.-C. Yoccoz.
\newblock Generalizations of some theorems of small divisors to
  non-{A}rchimedean fields.
\newblock In {\em Geometric dynamics ({R}io de {J}aneiro, 1981)}, volume 1007
  of {\em Lecture Notes in Math.}, pages 408--447. Springer, Berlin, 1983.

\bibitem[IY08]{ilyashenko-yakovenko:analDE}
Yulij Ilyashenko and Sergei Yakovenko.
\newblock {\em Lectures on analytic differential equations}, volume~86 of {\em
  Graduate Studies in Mathematics}.
\newblock American Mathematical Society, Providence, RI, 2008.

\bibitem[Lin04]{lindahl:siegellinprimechar}
Karl-Olof Lindahl.
\newblock On {S}iegel's linearization theorem for fields of prime
  characteristic.
\newblock {\em Nonlinearity}, 17(3):745--763, 2004.

\bibitem[Nak84]{nakamura:surfacesclass70curves}
Iku Nakamura.
\newblock On surfaces of class {${\rm VII}_0$} with curves.
\newblock {\em Invent. Math.}, 78(3):393--443, 1984.

\bibitem[RR88]{rosay-rudin:holomorphicmaps}
Jean-Pierre Rosay and Walter Rudin.
\newblock Holomorphic maps from {${\bf C}^n$} to {${\bf C}^n$}.
\newblock {\em Trans. Amer. Math. Soc.}, 310(1):47--86, 1988.

\bibitem[Rug12]{ruggiero:rigidification}
Matteo Ruggiero.
\newblock Rigidification of holomorphic germs with noninvertible differential.
\newblock {\em Michigan Math. J.}, 61(1):161--185, 2012.

\bibitem[Rug13]{ruggiero:rigidgerms}
Matteo Ruggiero.
\newblock Contracting rigid germs in higher dimensions.
\newblock To appear in Annales de l'Institut Fourier, Volume 63 (2013).
  Preprint available at \url{http://arxiv.org/abs/1109.6803}, 2013.

\bibitem[Ste57]{sternberg:localcontractionstheorempoincare}
Shlomo Sternberg.
\newblock Local contractions and a theorem of {P}oincar\'e.
\newblock {\em Amer. J. Math.}, 79:809--824, 1957.

\bibitem[Tom08]{toma:kahlerrankcpctcplxsurf}
Matei Toma.
\newblock On the {K}\"ahler rank of compact complex surfaces.
\newblock {\em Bull. Soc. Math. France}, 136(2):243--260, 2008.

\end{thebibliography}

\end{document}